\documentclass[11pt]{amsart}

\usepackage[T1]{fontenc}
\usepackage{amsmath,amssymb,mathtools}
\usepackage{xcolor}
\usepackage[margin=1.15in]{geometry}
\usepackage[colorlinks=true,linkcolor=blue,citecolor=blue,urlcolor=blue]{hyperref}
\hypersetup{
  pdftitle={Failure of analyticity-radius growth in energy-canceling fluid models},
  pdfauthor={Ke Chen, Haina Li, Quoc-Hung Nguyen, Ping Zhang}
}

\newcommand{\T}{\mathbb T}
\newcommand{\R}{\mathbb R}
\newcommand{\Z}{\mathbb Z}
\newcommand{\abs}[1]{\left|#1\right|}
\newcommand{\norm}[1]{\left\|#1\right\|}
\newcommand{\rad}{\operatorname{rad}}

\numberwithin{equation}{section}
\newtheorem{theorem}{Theorem}[section]
\newtheorem{proposition}[theorem]{Proposition}
\newtheorem{lemma}[theorem]{Lemma}

\theoremstyle{remark}
\newtheorem{remark}[theorem]{Remark}

\title[Energy cancellation and Fourier coefficient lower bounds]
{Failure of analyticity-radius growth in energy-canceling fluid models}

\author{Ke Chen}
\address{Department of Applied Mathematics, The Hong Kong Polytechnic
University, Kowloon, Hong Kong, PR China}
\email{k1chen@polyu.edu.hk}

\author{Haina Li}
\address{School of Mathematics and Statistics, Beijing Institute of
Technology, Beijing 100081, China}
\email{lihaina@bit.edu.cn}

\author{Quoc-Hung Nguyen}
\address{State Key Laboratory of Mathematical Sciences,
Academy of Mathematics and Systems Science, Chinese Academy of Sciences,
Beijing 100190, China; Institute of Mathematics, Academy of Mathematics
and Systems Science, the Chinese Academy of Sciences, Beijing 100190,
China}
\email{qhnguyen@amss.ac.cn}

\author{Ping Zhang}
\address{State Key Laboratory of Mathematical Sciences, Academy of Mathematics \& Systems Science, Chinese Academy of Sciences, Beijing 100190,
China; and School of Mathematical Sciences, University of Chinese Academy
of Sciences, Beijing 100049, China}
\email{zp@amss.ac.cn}

\subjclass[2020]{35B65, 35K58, 35Q35, 35R11}
\keywords{Navier--Stokes-type model, analyticity radius, energy identity,
Fourier lower bound, Cole--Hopf transform, projected dissipative SQG,
sparse Fourier cascade}

\begin{document}
\begin{abstract}
We prove that exact quadratic energy cancellation alone does not force
growth of the spatial analyticity radius.  On \(\T^3\), we construct an
explicit symmetric, translation-invariant, first-order bilinear operator
\(Q\) that preserves the divergence-free class and, for every smooth
real-valued divergence-free vector field
\(v\), satisfies
\[
  \int_{\T^3} Q(v,v)\cdot v\,dx=0.
\]
For every prescribed sufficiently small time \(T>0\), the equation
\(\partial_t u-\Delta u=Q(u,u)\) admits a global smooth, real-valued,
mean-zero, divergence-free solution \(u\) such that
\(\rad(u(0))=\rad(u(T))=1\).  The construction reduces the dynamics on
an invariant cyclic-shear class to viscous Burgers and tunes a Cole--Hopf
heat profile so that its nearest complex zero returns to its initial
distance from the real torus.

For every \(1<\alpha<2\) and every prescribed sufficiently small
\(T>0\), we also construct a symmetric sparse frequency set, its associated
Fourier projection, and trigonometric-polynomial initial data for the
projected dissipative surface quasi-geostrophic equation.  The resulting
unique global smooth solution \(\theta\) has infinite analyticity radius
initially but satisfies \(0<\rad(\theta(T))\leq1\).  An additively separated
Fourier cascade yields coefficientwise exponential lower bounds, while
uniform comparison estimates control the feedback interactions.  Thus
entire analyticity need not persist even from trigonometric-polynomial
data.  Together, the two constructions show that an exact energy identity
alone does not determine the frequency geometry governing analyticity-radius
growth.
\end{abstract}
\maketitle
\enlargethispage{4pt}

\section{Introduction}

Spatial analyticity is one expression of parabolic regularization in
viscous fluid equations.  For the incompressible
Navier--Stokes equations, classical Gevrey estimates and quantitative
lower bounds for the analyticity radius were established in
\cite{FoiasTemam1989,GrujicKukavica1998,HerbstSkibsted2011,lemarie2,lemarie1}; more
recent work has refined the small-time behavior in critical and
subcritical Sobolev spaces \cite{LiZhang2026}.  These results use both
the smoothing of the heat semigroup and detailed mapping properties of
the projected transport nonlinearity.

Relevant earlier results include Giga's space--time analyticity theorem
and the abstract nonlinear-evolution framework of Kato and Masuda
\cite{Giga1983,KatoMasuda1986}.  Weighted Fourier and
Gevrey formulations were developed by Biswas and Swanson
\cite{BiswasSwanson2007} and Foias and Temam \cite{FoiasTemam1989}.
More recent quantitative results include
the scale-invariant semilinear parabolic analysis of Chemin, Gallagher,
and the fourth author \cite{CheminGallagherZhang2020}, the estimates for
Navier--Stokes data in \(L^p\) obtained by Hu and the fourth author
\cite{HuZhang2022}, and the fourth author's instantaneous-radius bounds for
\(L^p\) solutions \cite{Zhang2023}.  These works concern the creation,
propagation, or quantitative lower bounds of analyticity radius.  Here
we ask whether energy cancellation alone forces the solution's
analyticity radius to increase strictly from an initially positive value.

One motivation for this question is the cancellation identity obeyed
by the Navier--Stokes nonlinearity:
\[
  \int_{\T^3}
  \mathbb P\operatorname{div}(u\otimes u)\cdot u\,dx=0.
\]
Although this identity controls the total kinetic energy, it contains
no sign information at the level of individual Fourier interactions.
Simplified, dyadic, and averaged fluid models show that preserving the
scaling, differential order, or energy law of an equation need not
preserve all of its dynamical properties; see, for example,
\cite{MontgomerySmith2001,KatzPavlovic2005,Tao2016}.

Tao's averaged
Navier--Stokes construction gives a particularly sharp example
\cite{Tao2016}.  In Tao's construction, the Euler bilinear operator is
averaged using rotations, dilations, and order-zero Fourier
multipliers.  The resulting operator retains the exact energy
cancellation and is no stronger from the viewpoint of the standard
harmonic-analysis estimates, yet the associated viscous equation has
a smooth solution that blows up in finite time.  Thus the energy law
and generic mapping estimates do not by themselves determine the
global dynamics.  They leave out algebraic and geometric information
carried by the actual transport nonlinearity.

We therefore ask the following question, which is weaker than the
finite-time blowup problem:

\begin{quote}
Does a Navier--Stokes-type quadratic energy cancellation, without
additional information about the interaction geometry, force a strict
increase of the spatial analyticity radius?
\end{quote}

Our first result gives a negative answer for an explicit quadratic
model.  Its first-order nonlinearity preserves divergence-free vector
fields, is the divergence of a quadratic stress followed by the Leray
projection, and satisfies the exact Navier--Stokes-type energy
cancellation.  Nevertheless, at a prescribed positive time the equation
admits a global smooth solution with exponential lower bounds for infinitely
many Fourier coefficients.  The model is anisotropic, but it has three nontrivial
components, and the constructed solution depends on all three spatial
variables.  We obtain the solution within an invariant cyclic family of
viscous Burgers shears.  Within this family, a two-frequency Cole--Hopf profile
is tuned so that a nearest complex zero returns to the same distance
from the real torus at the prescribed time.  The return of this zero gives
the exact Fourier lower bound needed for an upper bound on the
analyticity radius.  Theorem~\ref{thm:main} shows that the radius is
exactly one at times \(0\) and \(T\).

Our second result concerns the dissipative surface quasi-geostrophic
(SQG) equation.  The SQG equation was introduced in connection with
frontogenesis in \cite{ConstantinMajdaTabak1994}.  For subcritical
SQG, Dong and Li proved spatial analyticity for initial data in the
scaling-invariant Lebesgue space, together with decay estimates for
space--time derivatives \cite{DongLi2008}.  In the supercritical range
\(0<\alpha_{\mathrm{diss}}<1\), Li later proved optimal local Gevrey
estimates with Fourier weight
\(e^{ct\Lambda^{\alpha_{\mathrm{diss}}}}\) in critical Sobolev and Besov
spaces, matching the exponent of the linear semigroup
\cite{Li2024SQG}.  Since \(\alpha_{\mathrm{diss}}<1\), this Gevrey
estimate is weaker than spatial analyticity and does not give a positive
lower bound for the usual analyticity radius.
Other regularity and Gevrey results for the dissipative equation include
\cite{KiselevNazarovVolberg2007,CaffarelliVasseur2010,BiswasMartinezSilva2015}.

We study the quadratic nonlinearity of an orthogonally projected
dissipative SQG equation.  The projection preserves the scalar
\(L^2\) cancellation and the usual symmetrized SQG multiplier on every
interaction retained by the projection.  For each prescribed time,
Theorem~\ref{thm:SQG-main} constructs a fixed symmetric sparse frequency set on
which an infinite cascade occurs.  Its integer modes are generated
by a three-state recurrence whose dominant eigenvalue is \(\lambda\), while
the modulus \(\mu\) of the subleading eigenvalue satisfies
\(\mu<\sqrt\lambda\).  Consequently, the input modes in each generating
relation are nearly parallel but have unequal leading lengths.  The
SQG multiplier has size \(\mu^n\approx|k|^\gamma\), where
\(\gamma=\log\mu/\log\lambda<1/2\), while the triangle-inequality deficit decays like
\((\mu^2/\lambda)^n\).  Weighting the cascade by
\(|k|^{\alpha-\gamma}e^{-\rho|k|}\) exactly restores the dissipative
scale \(|k|^\alpha\).

Additive separation eliminates all unintended triads.  The remaining
feedback interactions at the input modes are exponentially smaller after the
weight is introduced.  Uniform upper and lower comparison bounds then survive finite
Galerkin truncation and pass to the limit, yielding a unique global smooth
solution.  For each prescribed sufficiently small time, the construction
starts from finitely many Fourier modes and produces a solution with finite,
strictly positive analyticity radius at that time.

The paper contains two main results:
\begin{enumerate}
\item We construct a Navier--Stokes-type quadratic model with an exact
\(L^2\) cancellation and use an explicit complex zero that returns to its
initial distance to prove that the analyticity radius need not increase strictly.
\item For the projected dissipative SQG equation, we construct a sparse
Fourier projection for which a global smooth solution evolves from
trigonometric-polynomial data and has finite, positive analyticity radius at
the prescribed time.
\end{enumerate}
Both give rigorous coefficientwise Fourier lower bounds for global smooth solutions of
their respective models.

\subsection*{Notation}

For \(d\in\{1,2,3\}\), let
\(\T^d=(\R/2\pi\Z)^d\).  If \(f\) is a scalar- or vector-valued
function on \(\T^d\), we use the Fourier convention
\[
  f(x)=\sum_{k\in\Z^d}\widehat f(k)e^{ik\cdot x},
  \qquad
  \widehat f(k)=\frac1{(2\pi)^d}
  \int_{\T^d}f(x)e^{-ik\cdot x}\,dx.
\]
Here and below, \(\abs{k}\) denotes the Euclidean norm.  The operator
\(\Lambda=(-\Delta)^{1/2}\) is the Fourier multiplier with symbol
\(\abs{k}\).  We write \(A\lesssim B\) when \(A\leq CB\), and
\(A{\approx} B\) when both \(A\lesssim B\) and \(B\lesssim A\).  Unless
stated otherwise, the implicit constants may depend on fixed equation
parameters and the mode geometry, but not on the large generation index or
the Galerkin truncation.

We use \(\R^+=[0,\infty)\).  For vector-valued functions,
\(\abs{\widehat f(k)}\) denotes the Euclidean norm of the Fourier
coefficient.

On the torus, exponential decay of the Fourier coefficients is the
spectral signature of spatial analyticity.  For a scalar- or vector-valued
function \(u\in L^2(\T^d)\), we quantify the largest admissible decay rate by
\begin{equation}\label{eq:radius}
  \rad(u)
  =
  \sup\left\{
  \sigma\geq0:
  \sum_{k\in\Z^d}
  e^{2\sigma\abs{k}}\abs{\widehat u(k)}^2<\infty
  \right\}.
\end{equation}
Thus \(\rad(u)\) is the supremal exponential Fourier weight allowed in
the \(L^2\)-based analytic norm.  When the underlying torus is clear, we
abbreviate \(\norm{f}_{L^2(\T^d)}\) as \(\norm{f}_2\).  All identities below are first
established for smooth real-valued solutions, for which the indicated
integrations by parts and Fourier rearrangements are justified.

\subsection*{Main results}

We now state the two main results: the vector-model theorem and the
sparse projected-SQG cascade theorem.

To isolate what follows from energy cancellation without the full
Navier--Stokes interaction geometry, we use the following symmetric
bilinear operator on vector fields over \(\T^3\):
\[
  Q(u,v)
  =
  -\frac12\mathbb P
  \sum_{j=1}^3
  e_{j+1}\partial_j(u_{j+1}v_{j+1}),
\]
where \(e_1,e_2,e_3\) are the standard basis vectors of \(\R^3\),
\(\mathbb P\) is the Leray projection, and the indices are taken modulo
\(3\); thus \(e_4=e_1\), \(u_4=u_1\), and \(v_4=v_1\).  This explicit
operator is translation invariant, homogeneous of first order in
frequency, and satisfies
\begin{equation}\label{eq:main-cancellation}
  \int_{\T^3}Q(v,v)\cdot v\,dx=0
\end{equation}
for every real-valued smooth divergence-free vector field \(v\).  This
identity is proved in Proposition~\ref{prop:energy} below.

We consider the corresponding semilinear parabolic system
\[
  \partial_t u-\Delta u=Q(u,u),
  \qquad \operatorname{div}u=0,\quad (t,x)\in \R^+\times\T^3.
\]
For the solution constructed below, spatial analyticity follows directly
from the positive Cole--Hopf heat profile in Section~\ref{sec:profile};
at the two endpoint times, the Fourier formula in Section~\ref{sec:Fourier}
determines the radius exactly.  The following theorem shows that this
radius need not increase strictly.  Its finite value at a positive time
also rules out entire analyticity and ultra-analytic regularity for this
solution.

\begin{theorem}
\label{thm:main}
There exists \(T_0>0\) such that, for every \(0<T<T_0\), the
equation above admits a real-valued, mean-zero, divergence-free global smooth solution
\(u\) satisfying the energy identity
\begin{equation}\label{eq:main-vector-energy}
  \frac12\norm{u(t)}_{L^2}^2
  +
  \int_0^t\norm{\nabla u(s)}_{L^2}^2\,ds
  =
  \frac12\norm{u(0)}_{L^2}^2
  \qquad(t\geq0).
\end{equation}
Moreover, the analyticity radius of \(u\) at time \(T\) is not larger
than its initial value; more precisely,
\begin{equation}\label{eq:same-radius}
  \rad(u(0))=\rad(u(T))=1.
\end{equation}
\end{theorem}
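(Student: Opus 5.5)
The plan is to reduce to viscous Burgers on an invariant family of cyclic shears. We take the ansatz
\[
  u(t,x)=\bigl(a(t,x_3),\,a(t,x_1),\,a(t,x_2)\bigr),
\]
so that \(u_{j+1}\) depends only on \(x_j\). Then \(\operatorname{div}u=0\) identically. Moreover each term \(e_{j+1}\partial_j(u_{j+1}^2)=e_{j+1}\partial_j\bigl(a(x_j)^2\bigr)\) is already divergence free, since its only nonzero component is the \((j+1)\)st and it does not depend on \(x_{j+1}\). It also has zero mean, so \(\mathbb P\) acts on it as the identity. Hence the system reduces exactly to three copies of
\[
  \partial_t a+a\,\partial_x a=\partial_x^2a \qquad\text{on }\T .
\]
The solution depends on all three spatial variables.

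By Cole--Hopf we set \(a=-2\partial_x\log\varphi\), where \(\varphi_t=\varphi_{xx}\) and \(\varphi>0\). We take a two-frequency profile
\[
  \varphi(t,x)=1+\beta e^{-t}\cos x+\gamma e^{-4t}\cos 2x,
\]
with real parameters \(\beta,\gamma\) small enough that \(\varphi>0\) on \(\R^+\times\T\). Then \(a\) is real, smooth, global, and mean zero, because it is the derivative of a periodic function. The energy identity \eqref{eq:main-vector-energy} follows by pairing the equation with \(u\) and using Proposition~\ref{prop:energy}; for a smooth solution this is routine.

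For the radius, we write \(z=e^{ix}\), so that \(\varphi(t,\cdot)=z^{-2}P_t(z)\) with \(P_t\) a quartic. Then
\[
  a=-2\partial_x\log\varphi=-2i\Bigl(\frac{zP_t'(z)}{P_t(z)}-2\Bigr),
\]
and a partial-fraction expansion gives, for \(k\neq0\), \(\widehat a(k)=c\sum_{j}\pm z_j^{\mp k}\). Here the sum runs over the roots \(z_j\) lying inside (respectively outside) the unit circle, according to the sign of \(k\), and each root enters with residue \(1\) (simple roots, generically). Roots come in pairs \(z,1/\bar z\), and there is the reflection \(x\mapsto-x\), so \(|\widehat a(k)|\) is a sum of at most two terms of the same modulus \(e^{-d(t)|k|}\). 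Here \(d(t)\) is the distance from \(\R\) of the nearest complex zero of \(\varphi(t,\cdot)\).

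If the nearest zeros are a conjugate-symmetric pair \(\pm x_0+i d\) with \(x_0\neq0,\pi\), the sum is \(e^{-dk}\cdot 2\cos(kx_0)\) times a unimodular factor. This vanishes for at most sparse \(k\), yet is bounded below by \(c e^{-d|k|}\) along infinitely many \(k\). Placing the nearest zero on \(\operatorname{Re}x\in\{0,\pi\}\) avoids even this issue, and I would choose the sign of \(\gamma\) to arrange that. For \(k\) along a coordinate axis the vector \(\widehat u(k)\) has Euclidean length \(|\widehat a(k_j)|\), and \(|k|=|k_j|\). Therefore \(\rad(u(t))=\rad(a(t))=d(t)\): the upper bound comes from the lower bounds on infinitely many coefficients, and the lower bound from the exponential upper bound \(|\widehat a(k)|\lesssim e^{-d|k|}\).

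The remaining task, and the main obstacle, is tuning \((\beta,\gamma)\) so that \(d(0)=d(T)=1\) for every small prescribed \(T\). Heat flow sends \(\beta e^{-t},\gamma e^{-4t}\to0\), which drives the zeros to infinity, so \(d(t)\to\infty\). The aim is therefore a profile whose nearest zero first moves toward the real axis and then returns.

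Restricting to \(x=\pi+iy\) with \(w=\cosh y\), the zero condition becomes a quadratic in \(w\):
\[
  1-\beta e^{-t}w+\gamma e^{-4t}(2w^2-1)=0 .
\]
Using the competing decay rates \(e^{-t}\) versus \(e^{-4t}\), I would compute \(\partial_t d\) at \(t=0\) explicitly. The plan is to impose \(d(0)=1\) as one constraint, which leaves a one-parameter family, and to show that \(\partial_td(0)\) changes sign along this family. At the sign change, \(\partial_td(0)=0\) and \(\partial_t^2d(0)<0\).

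Near that point the equation \(d(T)-1=0\) has the form \(T\,(\partial_td(0)+\tfrac12\partial_t^2d(0)\,T+O(T^2))=0\). Hence the implicit function theorem, or the intermediate value theorem, gives a parameter with return time exactly \(T\) for all \(0<T<T_0\). Finally I would check that positivity of \(\varphi\), simplicity of the roots, and the choice of nearest zero persist in this small neighbourhood.
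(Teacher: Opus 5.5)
Your proposal is correct. Its first half (cyclic shear, reduction to Burgers, Cole--Hopf, two-frequency heat profile) is the paper's. Your profile is the paper's \(\Phi\) after \(x\mapsto x+\pi\), with \(\beta=a_1\) and \(\gamma=a_2\), and your zero at \(\pi+i\) is the paper's zero at \(i\).

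The routes differ at the tuning step. The paper observes that requiring \(\cosh1\) to be a root of the quadratic in \(\cos x\) at both \(t=0\) and \(t=T\) gives a pair of \emph{linear} equations in \((a_1,a_2)\). Solving them gives \eqref{eq:a1a2} in closed form, and the only remaining inequality is \(C_s>\cosh1\). This yields an explicit threshold \(D(T_0)=D_*\), the factorization \eqref{eq:factorization}, and the exact coefficients \(2i(e^{-n}+e^{-\rho_s n})\), with no perturbation argument and no general partial-fraction analysis.

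You impose only \(d(0)=1\), i.e.\ \(\beta=(1+\gamma\cosh2)/\cosh1\), and perturb from the degenerate member of that family. With \(G(t,w)=1-\beta e^{-t}w+\gamma e^{-4t}(2w^2-1)\) one finds \(\partial_tG(0,\cosh1)=1-3\gamma\cosh2\). So the degenerate point is \(\gamma_*=1/(3\cosh2)\), \(\beta_*=4/(3\cosh1)\), which is exactly the \(T\to0\) limit \(D=3\) of the paper's coefficients. This point is admissible. First, \(\beta_*+\gamma_*\approx0.95<1\) gives \(\varphi>0\) nearby. Second, \(\gamma_*<1/(1+2\cosh^2 1)\) puts the other root \((1-\gamma_*)/(2\gamma_*\cosh1)\) of \(G(0,\cdot)\) above \(\cosh1\), so \(\pi+i\) is the nearest zero. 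The sign change is transversal: the \(\gamma\)-derivative of \(1-3\gamma\cosh2\) is \(-3\cosh2\ne0\). Hence your implicit-function argument goes through, and by local uniqueness it reproduces the paper's coefficients for small \(T\). What your route buys is independence from the linear solvability; the cost is a non-explicit \(T_0\) plus the nondegeneracy checks you list.

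Two corrections.

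First, at \(\gamma_*\) one has \(\partial_t^2G(0,\cosh1)=-1+15\gamma_*\cosh2=4>0\) while \(\partial_wG(0,\cosh1)<0\). Hence \(\partial_t^2d(0)>0\), not \(<0\). This actually matches your own picture of a zero that first approaches the real axis. In the paper, \(D\,\Phi(t,i)=D-(1+D)e^{-t}+e^{-4t}\) is convex in \(e^{-t}\) and vanishes at \(t=0,T\), so it is negative in between. The sign error is harmless, because the argument uses only the transversal sign change of \(\partial_td(0)\).

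Second, \(\abs{\widehat a(k)}\) is not a sum of at most two equal-modulus terms. The farther zero contributes \(2i(-1)^ke^{-\rho k}\), with the same phase as the leading term \(2i(-1)^ke^{-k}\). That common phase is what gives \(\abs{\widehat a(k)}\geq2e^{-k}\) for every \(k\geq1\).
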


For a symmetric set
\(\Gamma=-\Gamma\subset\Z^2\setminus\{0\}\), let
\(\Pi_\Gamma\) be the orthogonal Fourier projection onto \(\Gamma\), that is,
\(\Pi_\Gamma f(x)=\sum_{k\in\Gamma}\widehat f(k)e^{ik\cdot x}\).
Given \(1<\alpha<2\), we consider the projected SQG equation
\begin{equation}\label{eq:projected-SQG}
  \partial_t\theta+\Lambda^\alpha\theta=\mathcal Q_\Gamma(\theta,\theta),
  \qquad (t,x)\in \R^+\times\T^2.
\end{equation}
Here
\(\nabla^\perp=(-\partial_2,\partial_1)\) and 
the quadratic nonlinearity is defined by
\[
  \mathcal Q_\Gamma(h,h)
  =-
  \Pi_\Gamma
  \left[
  \left(\nabla^\perp\Lambda^{-1}\Pi_\Gamma h\right)
  \cdot\nabla\Pi_\Gamma h
  \right].
\]
The inverse multiplier is well defined here because
\(0\notin\Gamma\).

For every symmetric set \(\Gamma\),
Proposition~\ref{prop:SQG-energy} shows that the projected equation
preserves \(\operatorname{Ran}\Pi_\Gamma\) and satisfies the \(L^2\)
energy identity.

For the sparse projection and initial data constructed below, spatial
analyticity on \([0,T]\) follows from the coefficientwise upper bound in
Lemma~\ref{lem:barrier}, proved directly for the projected equation.
The following theorem shows that even trigonometric-polynomial initial
data can evolve into a solution with finite, positive analyticity radius.
Thus entire analyticity need not be preserved, and ultra-analytic
regularity need not hold at positive times.

\begin{theorem}
\label{thm:SQG-main}
Let \(1<\alpha<2\).  There exists \(T_\alpha>0\) such that, for every
\(0<T<T_\alpha\), one can choose a symmetric set
\(\Gamma=\Gamma_{\alpha,T}\subset\Z^2\setminus\{0\}\) and real-valued, mean-zero
trigonometric-polynomial initial data for which
\eqref{eq:projected-SQG} has a unique global smooth solution satisfying
the exact energy identity
\[
  \frac12\norm{\theta(t)}_{L^2}^2
  +\int_0^t\norm{\Lambda^{\alpha/2}\theta(s)}_{L^2}^2\,ds
  =\frac12\norm{\theta(0)}_{L^2}^2
  \qquad(t\geq0).
\]
Moreover, the analyticity radius of \(\theta\) is infinite initially
but finite and positive at time \(T\); more precisely,
\[
  \rad(\theta(0))=\infty,
  \qquad
  0<\rad(\theta(T))\leq1.
\]
\end{theorem}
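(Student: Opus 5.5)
The plan is to build the whole solution on a fixed sparse symmetric set \(\Gamma=\pm\{k_n\}_{n\ge0}\cup\pm\{\text{finitely many seed modes}\}\), chosen so that the projected dynamics reduces to a one-directional Fourier cascade. The modes are generated by the three-state integer recurrence described in the introduction. The dominant eigenvalue \(\lambda\) gives \(|k_n|\approx\lambda^n\), and the subleading modulus satisfies \(\mu<\sqrt\lambda\). Each new mode is then a sum \(k_{n+1}=p_n+q_n\) of two input modes in \(\Gamma\). These inputs are nearly parallel, have unequal lengths, and carry SQG multiplier
\[
  m(p_n,q_n)=\tfrac12\,(p_n^\perp\cdot q_n)\bigl(|q_n|^{-1}-|p_n|^{-1}\bigr)\approx\mu^n\approx|k_{n+1}|^\gamma .
\]
I would impose \emph{additive separation}: every sum \(a+b\) of two elements of \(\Gamma\) that lies in \(\Gamma\) is either one of the designed generating relations or a feedback relation \(k_{n+1}-q_n=p_n\) (and its permutations). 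I expect to obtain this by passing to a lacunary subsequence of the recurrence and enlarging a scaling parameter.

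The projected ODE system is then explicit. For each mode, \(\partial_t\widehat\theta(k)+|k|^\alpha\widehat\theta(k)\) equals the designed forcing term plus finitely many feedback terms. The trigonometric-polynomial data is supported on the seed modes only. Since \(\Gamma\) is symmetric, Proposition~\ref{prop:SQG-energy} gives invariance of \(\operatorname{Ran}\Pi_\Gamma\) and the energy identity for every Galerkin truncation \(\Gamma_N\), with bounds uniform in \(N\). This yields global existence, and the \(L^2\) identity together with the coefficient upper bound below gives uniqueness and smoothness in the limit.

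The main step is two-sided comparison. I want a barrier, matching Lemma~\ref{lem:barrier}, of the form
\[
  |\widehat\theta(t,k_n)|\le C\,|k_n|^{-M}e^{-\rho(t)|k_n|},
\]
valid for \(t\in(0,T]\) and uniform in \(N\), with a lower bound of the same exponential type at \(t=T\), namely \(\widehat\theta(T,k_n)\ge c\,|k_n|^{\alpha-\gamma}e^{-\rho|k_n|}\) up to polynomial factors and with \(\rho\le1\). The mechanism is Duhamel applied to the designed term. The forcing of size \(\mu^n\,\widehat\theta(p_n)\widehat\theta(q_n)\) is absorbed by dissipation \(|k|^\alpha\) over a time of order \(|k|^{-\alpha}\). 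The weight \(|k|^{\alpha-\gamma}e^{-\rho|k|}\) is therefore an approximate fixed point, because the deficit \(|p_n|+|q_n|-|k_{n+1}|\sim(\mu^2/\lambda)^n\to0\) costs only a bounded factor in the exponential. The sign must be propagated inductively: I would choose orientations so that each designed multiplier times the product of inputs is positive, and show by a bootstrap on the weighted quantities that feedback terms are exponentially smaller. For instance, a feedback term at \(p_n\) involves \(\widehat\theta(k_{n+1})\widehat\theta(q_n)\), which carries an extra factor \(e^{-\rho|q_n|}\) relative to its self-dissipation. The requirement that \(T\) be small is used to fix \(\rho(T)\) in the window needed for this comparison and to keep the seed modes near their initial values.

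From the lower bound, \(\sum e^{2\sigma|k_n|}|\widehat\theta(T,k_n)|^2=\infty\) for every \(\sigma>\rho\), so \(\rad(\theta(T))\le\rho\le1\). The upper barrier gives \(\rad(\theta(T))\ge\rho(T)>0\). Since the data is a trigonometric polynomial, \(\rad(\theta(0))=\infty\).

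I expect the main obstacle to be the uniform lower bound. The recursive constants must not degenerate over the infinite cascade; they should form a convergent product, using the summability of the deficit and of the feedback errors. The lower bound must also survive the Galerkin limit, which follows from uniform-in-\(N\) coefficientwise convergence on each fixed mode.
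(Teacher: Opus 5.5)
Your overall architecture matches the paper: a recurrence-generated sparse set, additive separation, a Galerkin limit using the $L^2$ identity, and two-sided coefficientwise barriers. However, the mechanism you propose for the lower bound cannot work.

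In the normalized variables $x_v=b_v/w_v$, the generating equation is $\dot x_k+d_kx_k=a_kx_px_q+(\text{feedback})$ with $c_1d_k\le a_k\le C_1d_k$. The forcing is quadratic in the amplitudes, so the weight $\abs{k}^{\alpha-\gamma}e^{-\rho\abs{k}}$ balances the coefficients but not the amplitudes. First-contact arguments therefore give multiplicative constraints along triads: $L_k\lesssim L_pL_q$ for a lower level and $U_k\gtrsim U_pU_q$ for an upper level. For generation-independent levels these force $L\ge d_k/a_k\ge U$, which leaves no room for $L<U$; polynomial corrections fail for the same reason. So the recursive constants compound instead of forming a convergent product. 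Indeed, the paper's upper barrier itself gives $x_v\le\overline C^{-1}q_*^{\ell_v}\to0$.

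The missing idea is the homogeneity degree $\ell_v$. It is defined by $\ell_k=\ell_p+\ell_q$ and grows like $\lambda^{n-n_0}\approx\abs{v}/R_{n_0}$. With it come the degree-adapted barriers $A_v=\underline c^{\ell_v-1}\varepsilon_-^{\ell_v}$ and $B_v=\overline C^{\ell_v-1}\varepsilon_+^{\ell_v}$. They satisfy $A_k=\underline cA_pA_q$ and $B_k=\overline CB_pB_q$, which gives room at contact once $\underline c<c_1$ and $\overline C>C_1$. The price is an extra exponential rate: $-\log A_v\le C_A\lambda^{-n_0}\abs{v}$, so at time $T$ one only gets $\abs{\widehat\theta(T,v)}\ge\abs{v}^{\alpha-\gamma}e^{-(\rho+C_A\lambda^{-n_0})\abs{v}}$. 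This is why the paper takes $\rho=1-C_A\lambda^{-n_0}$ and proves every barrier constant uniformly for $\rho\in[1/2,1]$.

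The degree growth also dictates the time schedule, which your plan does not handle. Before its inputs reach their lower levels, a mode is controlled only by $\abs{x_k}\le B_k$, with no sign. Since $\log(B_k/A_k)\approx\lambda^{j}$ with $j=n-n_0$, damping this possibly wrong-signed value below $A_k$ requires $d_k\delta_n\gtrsim\lambda^j$. A time of order $\abs{k}^{-\alpha}$ per generation is not enough. The paper uses $\delta_n=\tau\chi^{n-n_0}R_n^{-\alpha}$:
\begin{itemize}
\item damping needs $\chi>\lambda$;
\item summability of $\sum_n\delta_n$, so that every generation is activated by time $T$, needs $\chi<\lambda^\alpha$;
\item the two are compatible exactly because $\alpha>1$.
\end{itemize}
Your argument never uses $\alpha>1$, which signals the omission. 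For the first $J$ generations, where $\chi^j$ is still too small, a separate perturbation argument around the triangular system is needed (Lemma~\ref{lem:finite-depth-initialization}).

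The set $\Gamma$ must also depend on $T$. The base generation is fixed by $T_{n_0}\le T<\lambda^\alpha T_{n_0}$, i.e.\ $R_{n_0}^{-\alpha}\approx T$. This makes the cascade complete before $T$ while the base modes lose only a bounded factor. With seeds at a fixed scale, your activation argument cannot reach every generation before an arbitrarily small $T$.

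Finally, additive separation cannot come from a lacunary subsequence, which destroys the generating relations, or from a dilation, which preserves all linear relations. The paper derives it from the spectral expansion and the irreducibility of $z^3-z^2-2z+1$, after discarding finitely many generations.
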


The initial datum has finite Fourier support, while at time \(T\) the
solution satisfies a coefficientwise lower bound along an infinite lacunary
sequence.  Thus Theorem~\ref{thm:SQG-main} shows that the projected SQG
evolution itself generates such a bound.

\subsection*{Relation to previous work}

Classical analyticity results for the Navier--Stokes equations prove
finiteness of exponentially weighted norms and hence lower
bounds for the analyticity radius; see
\cite{FoiasTemam1989,GrujicKukavica1998,HerbstSkibsted2011,lemarie2,lemarie1,
BaeBiswasTadmor2012}.  Li and the fourth author \cite{LiZhang2026} recently
obtained refined small-time lower bounds for a class of generalized
three-dimensional Navier--Stokes equations in critical and
subcritical Sobolev spaces.  These results quantify analyticity
created from Sobolev initial data.  They do not assert that the
radius of a solution with analytic initial data increases strictly
between the initial time and an arbitrarily prescribed sufficiently small
positive time.  Theorem~\ref{thm:main}
addresses this different endpoint question by producing explicit
coefficientwise Fourier lower bounds at both \(t=0\) and \(t=T\).
Related upper bounds for the maximal analyticity radius of regular
Navier--Stokes solutions were studied by Biswas and Foias
\cite{BiswasFoias2014}.

The relation between complex singularities and spatial analyticity for
viscous Burgers has been studied in detail; see Senouf
\cite{Senouf1997} and Weideman \cite[Section~3]{Weideman2022}.
For periodic Burgers solutions, the latter describes complex poles that
approach the real axis and subsequently recede, giving a contracting and
then expanding analyticity strip.  In Theorem~\ref{thm:main}, we combine
this classical mechanism with an explicit divergence-free vector model
whose energy cancellation holds for every smooth solenoidal input.
The two-frequency heat profile is tuned to a prescribed sufficiently
small time \(T\), giving the exact equality
\(\rad(u(0))=\rad(u(T))=1\) and explicit coefficientwise Fourier lower
bounds at both endpoints.

The constructions of Montgomery-Smith
\cite{MontgomerySmith2001}, Katz and Pavlovi\'c
\cite{KatzPavlovic2005}, and Tao \cite{Tao2016} demonstrate that
energy cancellation and standard differential-order estimates do not
determine the dynamics of a modified fluid equation.  Our
conclusion is different: the vector model admits a global smooth
solution with the same analyticity radius at two prescribed endpoint
times.

Gevrey regularity estimates for dissipative SQG are known in several
regimes; see
\cite{BaeBiswas2015,BiswasMartinezSilva2015,Li2024SQG}.  These results
control the full solution in an exponentially weighted norm.  Boundary
geometry can change the Fourier decay: for subcritical SQG on a square
with mixed boundary conditions, Chernov and Li obtained estimates that
are exponential in one Fourier index but only algebraic in the other
\cite{ChernovLi2012}.  Theorem~\ref{thm:SQG-main} has a different
purpose: it constructs a projected SQG evolution whose exact Fourier
geometry produces exponential lower bounds for infinitely many coefficients
from finite Fourier support.
The projection is highly sparse and depends on the prescribed time, so
the conclusion is not asserted for the unprojected SQG equation.

\subsection*{Analytic mechanisms behind the constructions}

The coefficientwise Fourier lower bound in the vector model comes from the
complex zeros of the Cole--Hopf heat profile.  For
\(s\in\{0,T\}\), the
factorization
\[
\Phi(s,x)
=
2a_2e^{-4s}
(\cosh1-\cos x)(\cosh\rho_s-\cos x),
\qquad \rho_s>1,
\]
places the nearest zeros at \(x=2\pi m\pm i\).  The Fourier identity
\eqref{eq:Fourier-identity} then gives
\[
\widehat g(s,n)
=
2i\left(e^{-n}+e^{-\rho_s n}\right),
\qquad n\geq1.
\]
The upper bound for the endpoint analyticity radius therefore comes from
an explicit nonvanishing family of Fourier coefficients, rather than
from an abstract analytic-norm estimate.

For the projected SQG cascade, a key geometric quantity is the
triangle-inequality deficit
\[
\delta(p,q)
=
\abs{p}+\abs{q}-\abs{p+q}.
\]
The three-state recurrence produces input modes of size comparable to
\(\lambda^n\), angle
\(O((\mu/\lambda)^n)\), and unequal leading lengths.  Therefore
\[
 \delta(p,q)\lesssim(\mu^2/\lambda)^n,
 \qquad
 \left|(p^\perp\!\cdot q)(|p|^{-1}-|q|^{-1})\right|
{\approx}\mu^n{\approx} |p+q|^\gamma.
\]
With \(\beta=\alpha-\gamma\), the weight
\(w_k=|k|^\beta e^{-\rho|k|}\) makes each normalized generating
coefficient comparable to \(\lambda^{\alpha n}\), matching the output
dissipation.  Additive
separation reduces the full projected equation to the two generating
relations and exponentially suppressed feedback interactions at the input
modes.  The comparison bounds then propagate this balance through all
generations.

Section~\ref{sec:model} develops the vector construction from the definition
of the model through the proof of Theorem~\ref{thm:main}.  It first
establishes the energy law and the invariant Burgers reduction, then selects
a heat profile whose nearest complex zero returns to its initial distance and
converts the endpoint factorization into an exact Fourier lower bound.  The
projected SQG construction is independent: Section~\ref{sec:SQG} builds the
sparse cascade and proves the comparison bounds and global regularity needed
to pass it to the infinite system.  Section~\ref{sec:scope} discusses the
scope of both constructions and the remaining questions.

\section{A Navier--Stokes-type model: energy cancellation and endpoint analyticity}
\label{sec:model}

\subsection{The model and its energy law}

We use the standard basis and take component indices modulo \(3\); thus
\(e_4=e_1\), \(u_4=u_1\), and similarly for other vector fields.
Let \(\mathbb P\) denote the Leray projection onto divergence-free
vector fields.  Define the symmetric bilinear operator
\begin{equation}\label{eq:Q}
  Q(u,v)
  =
  -\frac12\mathbb P
  \sum_{j=1}^3
  e_{j+1}\partial_j(u_{j+1}v_{j+1}).
\end{equation}
We consider the following equations for \((t,x)\in\R^+\times\T^3\),
\begin{equation}\label{eq:model}
  \partial_t u-\Delta u=Q(u,u),
  \qquad
  \operatorname{div}u=0,
  \qquad
  u|_{t=0}=u^{\rm in}.
\end{equation}
The projection in \eqref{eq:Q} guarantees that the right-hand side is
divergence free.  Thus, once the initial datum is divergence free,
equation \eqref{eq:model} evolves entirely within the solenoidal
subspace.  As in the Navier--Stokes equation, the projection may be
viewed as eliminating the pressure from the evolution.

This nonlinearity has the same differential order as the
Navier--Stokes transport term.  To make its divergence structure
explicit, set
\begin{equation}\label{eq:stress}
  \mathcal S(u,v)
  =
  \sum_{j=1}^3
  u_{j+1}v_{j+1}\,e_{j+1}\otimes e_j.
\end{equation}
With the row-wise convention
\((\operatorname{div}A)_\ell=\sum_{j=1}^3\partial_j A_{\ell j}\),
\begin{equation}\label{eq:Q-stress}
  Q(u,v)
  =
  -\frac12\mathbb P\operatorname{div}\mathcal S(u,v).
\end{equation}
Thus \(Q\) is translation invariant, bilinear, and homogeneous of
first order in frequency.  Before constructing a special solution, we
verify that the model has the exact structural property under
investigation.  The cyclic arrangement of the components yields the
following cancellation and hence the standard \(L^2\) energy law.

\begin{proposition}
\label{prop:energy}
For every real-valued smooth divergence-free vector field \(u\) on
\(\T^3\),
\begin{equation}\label{eq:cancellation}
  \int_{\T^3}Q(u,u)\cdot u\,dx=0.
\end{equation}
Consequently, every smooth solution of \eqref{eq:model} satisfies
\begin{equation}\label{eq:energy}
  \frac12\frac{d}{dt}\norm{u(t)}_{L^2}^2
  +\norm{\nabla u(t)}_{L^2}^2=0.
\end{equation}
\end{proposition}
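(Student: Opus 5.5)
Since \(u\) is divergence free and \(\mathbb P\) is the \(L^2\)-orthogonal projection onto divergence-free fields (self-adjoint, with \(\mathbb P u=u\)), we first remove the projection:
\[
\int_{\T^3}Q(u,u)\cdot u\,dx=-\frac12\int_{\T^3}\sum_{j=1}^3 e_{j+1}\partial_j(u_{j+1}^2)\cdot u\,dx
=-\frac12\sum_{j=1}^3\int_{\T^3}u_{j+1}\,\partial_j(u_{j+1}^2)\,dx .
\]
On the torus this step is justified at the level of Fourier coefficients, since \(\widehat{\mathbb P f}(k)\) is the orthogonal projection of \(\widehat f(k)\) onto \(k^\perp\) and \(\widehat u(k)\in k^\perp\).

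Each term in the last sum is a derivative of a single component. Write \(w=u_{j+1}\); then \(w\,\partial_j(w^2)=2w^2\partial_j w=\tfrac23\partial_j(w^3)\). This is an exact derivative on \(\T^3\), so its integral vanishes by periodicity. Hence each of the three summands is zero, and \eqref{eq:cancellation} follows. Notably, the divergence-free condition enters only through \(\mathbb P u=u\). The cyclic index structure means that component \(j+1\) is differentiated only in direction \(j\) and only against itself. This makes each term a perfect cube derivative rather than a genuine triad interaction.

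For the energy identity \eqref{eq:energy}, we take the \(L^2\) inner product of \eqref{eq:model} with the smooth solution \(u(t)\). We integrate \(-\Delta u\cdot u\) by parts to obtain \(\norm{\nabla u}_{L^2}^2\), and we use \(\int\partial_t u\cdot u=\frac12\frac{d}{dt}\norm{u}_{L^2}^2\). The right-hand side vanishes by \eqref{eq:cancellation}, applied at each fixed time to the divergence-free field \(u(t)\).

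The only point that requires care is the self-adjointness step for \(\mathbb P\). Here we check that the expression inside \(\mathbb P\) is a smooth periodic vector field, which holds for smooth \(u\), so that the Fourier-side projection argument applies. Everything else reduces to integration by parts on the torus.
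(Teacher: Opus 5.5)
Your proposal is correct and follows the paper's proof essentially verbatim. You remove \(\mathbb P\) using self-adjointness and \(\mathbb P u=u\), reduce each cyclic term via the chain rule to \(-\frac13\int_{\T^3}\partial_j(u_{j+1}^3)\,dx=0\) by periodicity, and then pair the equation with \(u\) to get the energy law; your Fourier-side justification of the projection step is a harmless extra detail.
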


\begin{proof}
Because \(u\) is divergence free, \(\mathbb Pu=u\).  The
self-adjointness of \(\mathbb P\) therefore allows us to remove
\(\mathbb P\) from the pairing.  Using the definition of \(Q\) and the
chain rule, we obtain
\begin{align*}
  \int_{\T^3}Q(u,u)\cdot u\,dx
  &=
  -\frac12
  \sum_{j=1}^3
  \int_{\T^3}
  \partial_j(u_{j+1}^2)u_{j+1}\,dx\\
  &=
  -\frac13
  \sum_{j=1}^3
  \int_{\T^3}
  \partial_j(u_{j+1}^3)\,dx
  =0.
\end{align*}
The second equality is the chain rule, and periodicity makes the last
integral vanish for each \(j\) separately.
This componentwise cancellation is precisely the role of the cyclic
placement in \eqref{eq:Q}; no cross-term between different indices is
needed.
The Laplacian contributes
\(-\norm{\nabla u}_{L^2}^2\).  Taking the \(L^2\) inner product of
\eqref{eq:model} with \(u\) and using \eqref{eq:cancellation} now
gives \eqref{eq:energy}.
\end{proof}

\begin{remark}[Scope of the cancellation identity]
The two ingredients play different roles.  The cancellation in
Proposition~\ref{prop:energy} holds for every smooth divergence-free input,
as it should for an analogue of the Navier--Stokes energy law.  The specific
solution constructed below provides the endpoint coefficientwise Fourier lower bound that
prevents a strict increase of the analyticity radius.
\end{remark}

\subsection{Reduction to cyclic Burgers shears}
\label{sec:shear}

To construct a solution whose analyticity radius can be tracked exactly,
we identify an invariant class on which the vector equation becomes
explicitly solvable.  Within this class, the cyclic structure reduces the
three-dimensional system to three copies of the same one-dimensional
viscous Burgers equation.

\begin{proposition}[Invariant shear class]
\label{prop:shear}
Let \(g=g(t,x)\) be a real \(2\pi\)-periodic function of one
variable and define
\begin{equation}\label{eq:shear}
  u(t,x_1,x_2,x_3)
  =
  \bigl(g(t,x_3),g(t,x_1),g(t,x_2)\bigr).
\end{equation}
Then \(\operatorname{div}u=0\), and \(u\) solves
\eqref{eq:model} if and only if
\begin{equation}\label{eq:Burgers}
  \partial_t g-\partial_x^2 g
  =
  -\frac12\partial_x(g^2).
\end{equation}
\end{proposition}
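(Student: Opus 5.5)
The argument is a direct computation on the ansatz \eqref{eq:shear}. I will compute the divergence, show that the Leray projection acts trivially on the relevant stress term, and then compare the three components of \eqref{eq:model}.

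\emph{Divergence.} Each component of \(u\) is independent of its own coordinate: \(u_1=g(t,x_3)\) does not depend on \(x_1\), \(u_2=g(t,x_1)\) not on \(x_2\), and \(u_3=g(t,x_2)\) not on \(x_3\). Hence \(\partial_1u_1+\partial_2u_2+\partial_3u_3=0\) identically.

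\emph{The unprojected nonlinearity.} Next I expand \(\sum_{j}e_{j+1}\partial_j(u_{j+1}^2)\) term by term, using the cyclic convention.
\begin{itemize}
\item For \(j=1\), \(u_2^2=g(t,x_1)^2\), so the term is \(e_2\,\partial_{x_1}(g^2)(t,x_1)\).
\item For \(j=2\), \(u_3^2=g(t,x_2)^2\), giving \(e_3\,\partial_{x_2}(g^2)(t,x_2)\).
\item For \(j=3\), \(u_1^2=g(t,x_3)^2\), giving \(e_1\,\partial_{x_3}(g^2)(t,x_3)\).
\end{itemize}
The vector field inside \(\mathbb P\) is therefore \(F=(h(x_3),h(x_1),h(x_2))\) with \(h=\partial_x(g^2)\). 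Here \(F\) has the same shear structure as \(u\), so by the first step it is divergence free. It follows that \(\mathbb PF=F\), and
\[
Q(u,u)=-\tfrac12\bigl(\partial_x(g^2)(t,x_3),\,\partial_x(g^2)(t,x_1),\,\partial_x(g^2)(t,x_2)\bigr).
\]

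\emph{Comparing components.} We also have \(\partial_tu-\Delta u=\bigl((\partial_tg-\partial_x^2g)(t,x_3),\dots\bigr)\), since each component depends on a single variable. Equation \eqref{eq:model} therefore reduces componentwise to \eqref{eq:Burgers}, evaluated at \(x_3\), \(x_1\), and \(x_2\) respectively. The divergence constraint holds automatically by the first step. For the converse direction, note that a single component already forces \eqref{eq:Burgers} for all \((t,x)\), because the variable \(x_3\) ranges over the whole circle; conversely, \eqref{eq:Burgers} gives all three components at once.

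The only point that needs care is justifying \(\mathbb PF=F\). On \(\T^3\), the projection acts on a mean-zero divergence-free field as the identity. The mean of \(F\) is zero because each component \(h\) is an exact \(x\)-derivative of a periodic function. So this step is also routine.
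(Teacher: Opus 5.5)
Your proposal is correct and follows essentially the same route as the paper: the divergence vanishes because each component is independent of its own coordinate, the unprojected field $\bigl(\partial_3 g^2(t,x_3),\partial_1 g^2(t,x_1),\partial_2 g^2(t,x_2)\bigr)$ has the same shear structure and is therefore fixed by $\mathbb P$, and comparing components gives \eqref{eq:Burgers} in both directions. Your extra check that this field has mean zero makes $\mathbb P F=F$ valid under either convention for how $\mathbb P$ treats the zero mode.
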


\begin{proof}
Each component in \eqref{eq:shear} is independent of its own
coordinate, so the three terms in its divergence vanish separately.
On the other hand,
\[
  \sum_{j=1}^3
  e_{j+1}\partial_j(u_{j+1}^2)
  =
  \bigl(
  \partial_3g^2(t,x_3),
  \partial_1g^2(t,x_1),
  \partial_2g^2(t,x_2)
  \bigr).
\]
The vector field on the right is also divergence free, so the Leray
projection acts as the identity.  Comparing the three
components of \eqref{eq:model} gives precisely \eqref{eq:Burgers}.
The converse follows from the same computation.
\end{proof}

We next linearize the scalar equation by the classical Cole--Hopf transform
\cite{Hopf1950,Cole1951}.  The following lemma records the normalization
that matches the coefficient in \eqref{eq:Burgers}; its positivity
assumption will later ensure that the logarithm defines a global smooth
profile.

\begin{lemma}[Cole--Hopf transform]
\label{lem:ColeHopf}
Suppose that \(\Phi\in C^\infty([0,\infty)\times\T)\) is positive and
\[
  \partial_t\Phi=\partial_x^2\Phi.
\]
Then
\begin{equation}\label{eq:ColeHopf}
  g=-2\partial_x\log\Phi
\end{equation}
is a smooth real-valued solution of \eqref{eq:Burgers}.
\end{lemma}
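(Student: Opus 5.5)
The plan is a direct computation.

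\emph{Smoothness and reality.} Since \(\Phi\) is smooth and strictly positive on \([0,\infty)\times\T\), the function \(\log\Phi\) is smooth, real-valued and \(2\pi\)-periodic in \(x\). Hence \(g=-2\partial_x\log\Phi=-2\Phi_x/\Phi\) is smooth, real-valued and periodic. It is also automatically mean zero, because it is the derivative of a periodic function.

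\emph{Reduction to a potential equation.} Introduce the potential \(\psi=-2\log\Phi\), so that \(g=\psi_x\). Using \(\Phi_t=\Phi_{xx}\), compute
\[
\psi_t=-2\frac{\Phi_{xx}}{\Phi},\qquad
\psi_{xx}=-2\frac{\Phi_{xx}}{\Phi}+2\frac{\Phi_x^2}{\Phi^2},\qquad
\psi_x^2=4\frac{\Phi_x^2}{\Phi^2}.
\]
These combine to give
\[
\psi_t-\psi_{xx}=-2\frac{\Phi_x^2}{\Phi^2}=-\tfrac12\psi_x^2 .
\]

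\emph{Differentiation.} Differentiating this identity in \(x\), which is legitimate because everything is smooth, gives
\[
g_t-g_{xx}=-\tfrac12\partial_x(g^2),
\]
and this is exactly \eqref{eq:Burgers}.

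The only point needing care is the normalization: the factor \(-2\) in \eqref{eq:ColeHopf} must match the coefficient \(\tfrac12\) in front of \(\partial_x(g^2)\). The computation above confirms that it does. There is no real obstacle beyond this bookkeeping, and positivity of \(\Phi\) is used only to make the logarithm smooth for all \(t\geq0\).
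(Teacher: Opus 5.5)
Your proposal is correct and takes essentially the same route as the paper. The paper works with \(h=\log\Phi\), derives \(h_t=h_{xx}+h_x^2\), differentiates in \(x\) and substitutes \(g=-2h_x\); this is your computation with \(\psi=-2h\).
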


\begin{proof}
Let \(h=\log\Phi\).  Since \(\Phi\) is positive, this is a smooth
real-valued function, and the heat equation gives
\[
  h_t=\frac{\Phi_t}{\Phi}
  =\frac{\Phi_{xx}}{\Phi}
  =h_{xx}+h_x^2.
\]
Differentiation in \(x\) yields
\(h_{xt}=h_{xxx}+2h_x h_{xx}\).  Substituting
\(g=-2h_x\), \(g_x=-2h_{xx}\), and
\(g_{xx}=-2h_{xxx}\), we find
\[
  g_t-g_{xx}+gg_x=0,
\]
which is \eqref{eq:Burgers}.  This completes the proof of
Lemma~\ref{lem:ColeHopf}.
\end{proof}

\subsection{A returning complex zero}
\label{sec:profile}

We now specify the heat profile.  Two constraints must be met
simultaneously: positivity on the real torus, which keeps the Cole--Hopf
transform globally smooth, and a complex zero at distance one from the real
axis at both endpoint times.  A two-frequency ansatz meets both constraints.

Set
\begin{equation}\label{eq:D}
  D(T)=e^{-T}+e^{-2T}+e^{-3T},
  \qquad
  c=\cosh1,
  \qquad
  c_2=\cosh2.
\end{equation}
The identity \(c_2=2c^2-1\), together with \(c>1\), implies
\[
  D_*:=\frac{1+2c^2}{c_2}<3=D(0).
\]
Since \(D(T)\) is strictly decreasing from \(3\) to \(0\), there is
a unique \(T_0>0\) such that
\begin{equation}\label{eq:T0}
  D(T_0)=D_*.
\end{equation}

Fix \(0<T<T_0\), set \(r=e^{-T}\) and \(D=D(T)\), and define
\begin{equation}\label{eq:a1a2}
  a_2=\frac1{c_2D},
  \qquad
  a_1=\frac{1+D}{cD}.
\end{equation}
Consider
\begin{equation}\label{eq:Phi}
  \Phi(t,x)
  =
  1-a_1e^{-t}\cos x+a_2e^{-4t}\cos(2x).
\end{equation}
By construction, \(\Phi\) satisfies \(\Phi_t=\Phi_{xx}\).  The coefficients in
\eqref{eq:a1a2} are chosen so that the same prescribed root
appears at both \(t=0\) and \(t=T\).  The following factorization makes this
choice explicit and identifies the nearest complex zeros that will
determine the analyticity radius at endpoint times.

\begin{lemma}
\label{lem:factorization}
For each \(s\in\{0,T\}\), there exists a number
\(\rho_s>1\) such that
\begin{equation}\label{eq:factorization}
  \Phi(s,x)
  =
  2a_2e^{-4s}
  (\cosh1-\cos x)(\cosh\rho_s-\cos x).
\end{equation}
In particular, \(\Phi(s,x)>0\) for every
\((s,x)\in\{0,T\}\times\mathbb R\).
\end{lemma}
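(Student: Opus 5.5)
The plan is to view \(\Phi(s,\cdot)\) as a quadratic polynomial in \(y=\cos x\). Using \(\cos(2x)=2y^2-1\), and writing \(E_1=e^{-s}\) and \(E_4=e^{-4s}\), we get
\[
  \Phi(s,x)=2a_2E_4\,y^2-a_1E_1\,y+\bigl(1-a_2E_4\bigr).
\]
Since \(a_2>0\), the leading coefficient \(2a_2E_4\) is positive. So \eqref{eq:factorization} is exactly the claim that this quadratic has real roots \(y=c=\cosh1\) and \(y=C_s=\cosh\rho_s\) with \(C_s>c\). I would carry this out in three steps.

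\emph{Step 1: \(y=c\) is a root at both endpoint times.} Substituting \(y=c\) and using \(c_2=2c^2-1\), the condition becomes
\[
  1+a_2E_4c_2=a_1E_1c .
\]
With \(a_2=1/(c_2D)\) and \(a_1=(1+D)/(cD)\) from \eqref{eq:a1a2}, this reads \(1+E_4/D=E_1(1+D)/D\).
At \(s=0\) it is the identity \(1+1/D=(1+D)/D\).
At \(s=T\), with \(r=e^{-T}\), it becomes \(D+r^4=r+rD\), i.e. \(D(1-r)=r(1-r)(1+r+r^2)\). Since \(r\neq1\), this is \(D=r+r^2+r^3\), which is the definition \eqref{eq:D}. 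This is precisely why \(D(T)\) was defined as it was.

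\emph{Step 2: identify the second root.} By Vieta's product formula, the second root is
\[
  C_s=\frac{1-a_2E_4}{2a_2E_4c}.
\]
The quadratic therefore factors as \(2a_2E_4(c-y)(C_s-y)\), which is \eqref{eq:factorization} once we write \(C_s=\cosh\rho_s\).

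\emph{Step 3: show \(C_s>c\), so that \(\rho_s:=\operatorname{arccosh}C_s>1\).} I expect this to be the only genuinely constraining step, and it is where \(T<T_0\) enters. The inequality \(C_s>c\) is equivalent to
\[
  1>a_2E_4(1+2c^2)=E_4\,\frac{D_*}{D}.
\]
At \(s=0\) this says \(D_*<D(T)\). That holds because \(D\) is strictly decreasing and \(T<T_0\), with \(D(T_0)=D_*\) as in \eqref{eq:T0}.
At \(s=T\) it says \(r^4D_*<D\). This follows from \(r^4<1\) together with \(D_*<D\).

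\emph{Positivity.} For real \(x\) we have \(\cos x\le1<c<C_s\). Hence both factors in \eqref{eq:factorization} are strictly positive, and the prefactor \(2a_2e^{-4s}\) is positive, so \(\Phi(s,x)>0\) for \(s\in\{0,T\}\) and all real \(x\).
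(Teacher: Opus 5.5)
Your proposal is correct and follows the paper's proof essentially step for step: you reduce to a quadratic in \(y=\cos x\), check the root \(y=\cosh 1\), use Vieta's formula for the second root, and compare \(C_s>c\) via \(D>D_*\); your rearrangement \(D(1-r)=r(1-r)(1+r+r^2)\) is the paper's identity \(r(1+D)=D+r^4\) read in reverse. The only cosmetic difference is that you verify \(C_T>c\) directly from \(r^4D_*<D\), whereas the paper deduces \(C_T>C_0\) from the monotonicity of \((1-\alpha)/(2\alpha c)\) in \(\alpha\).
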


\begin{proof}
Set \(y=\cos x\).  Since \(\cos(2x)=2y^2-1\), the expression in
\eqref{eq:Phi} becomes the quadratic polynomial
\[
  P_s(y)
  =2\alpha_s y^2-a_1e^{-s}y+(1-\alpha_s),
  \qquad
  \alpha_s=a_2e^{-4s}.
\]
We first check that \(y=c\) is a root at both endpoint times.
At \(s=0\), \eqref{eq:a1a2} gives
\[
  1-a_1c+a_2c_2
  =
  1-\frac{1+D}{D}+\frac1D
  =0.
\]
At \(s=T\), using
\[
  r(1+D)=r+r^2+r^3+r^4=D+r^4,
\]
we obtain
\[
  1-a_1rc+a_2r^4c_2
  =
  1-\frac{r(1+D)}D+\frac{r^4}D
  =0.
\]

Since the leading and constant coefficients of \(P_s\) are
\(2\alpha_s\) and \(1-\alpha_s\), respectively, the product of its
two roots equals \((1-\alpha_s)/(2\alpha_s)\).  Since one root is \(c\),
the second root is
\[
  C_s
  =
  \frac{1-\alpha_s}{2\alpha_s c}.
\]
The restriction \(T<T_0\), or equivalently \(D>D_*\), gives
\[
  a_2
  =
  \frac1{c_2D}
  <
  \frac1{1+2c^2},
\]
and hence
\[
  C_0=\frac{1-a_2}{2a_2c}>c.
\]
The function \((1-\alpha)/(2\alpha c)\) decreases with \(\alpha\),
and \(\alpha_T<a_2\); hence \(C_T>C_0>c\).  We may therefore define
\(\rho_s=\operatorname{arccosh}C_s\), for which \(\rho_s>1\).
Matching the leading coefficient of the quadratic yields
\[
  P_s(y)=2\alpha_s(c-y)(C_s-y),
\]
which is \eqref{eq:factorization} after substituting
\(y=\cos x\), \(c=\cosh1\), and \(C_s=\cosh\rho_s\).  Since
\(\cosh1>1\) and \(\rho_s>1\), both \(\cosh1\) and
\(\cosh\rho_s\) are strictly larger than \(\cos x\) for every real
\(x\).  Both factors in \eqref{eq:factorization} are therefore positive at
the two endpoint times.
\end{proof}

The factorization also explains the role of the complex zero.  The
zeros of \(\cosh1-\cos z\) are
\(z=2\pi m\pm i\), \(m\in\Z\), whereas the zeros contributed by the
second factor lie at imaginary distance \(\rho_s>1\).  Thus the first
factor supplies the nearest complex zeros at both endpoint times.

The factorization at endpoint times does not by itself guarantee positivity at
intermediate times.  The next proposition supplies this missing global
property, ensuring that the Cole--Hopf logarithm remains well defined for
the entire evolution.

\begin{proposition}
\label{prop:positive-Phi}
The function \(\Phi\) in \eqref{eq:Phi} is strictly positive for
every \(t\geq0\) and \(x\in\mathbb R\).  Consequently,
\[
  g(t,x)=-2\partial_x\log\Phi(t,x)
\]
is a global smooth real-valued solution of
\eqref{eq:Burgers}.
\end{proposition}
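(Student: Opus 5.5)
The plan is to reduce positivity of \(\Phi\) to a one-variable polynomial inequality on \([-1,1]\). As in the proof of Lemma~\ref{lem:factorization}, set \(y=\cos x\in[-1,1]\) and write
\[
  \Phi(t,x)=q_t(y):=2\alpha_t y^2-\beta_t y+(1-\alpha_t),
  \qquad \alpha_t=a_2e^{-4t},\quad \beta_t=a_1e^{-t}.
\]
It suffices to show that \(q_t>0\) on \([-1,1]\) for every \(t\geq0\). Since \(\alpha_t>0\), \(q_t\) is a convex parabola with vertex at
\[
  y_t^*=\frac{\beta_t}{4\alpha_t}=\frac{a_1}{4a_2}e^{3t}.
\]
The first step is to show that \(y_t^*\geq1\) for all \(t\geq0\). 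Then \(q_t\) is nonincreasing on \([-1,1]\), and its minimum there is attained at \(y=1\).

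For the vertex bound, \eqref{eq:a1a2} gives \(a_1/(4a_2)=(1+D)c_2/(4c)\), and \(e^{3t}\geq1\), so it is enough to check \((1+D)c_2>4c\). Because \(T<T_0\), we have \(D>D_*\), that is, \(Dc_2>1+2c^2\). Combined with \(c_2=2c^2-1\), this yields
\[
  (1+D)c_2>c_2+1+2c^2=4c^2>4c,
\]
since \(c=\cosh1>1\). Hence \(a_1>4a_2\) and \(y_t^*>1\) for every \(t\geq0\).

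It then remains to show that \(f(t):=q_t(1)=1-a_1e^{-t}+a_2e^{-4t}\) is positive for all \(t\geq0\). At \(t=0\), Lemma~\ref{lem:factorization} with \(x=0\) gives
\[
  f(0)=2a_2(\cosh1-1)(\cosh\rho_0-1)>0.
\]
Moreover,
\[
  f'(t)=e^{-t}\bigl(a_1-4a_2e^{-3t}\bigr)\geq e^{-t}(a_1-4a_2)>0,
\]
by the inequality \(a_1>4a_2\) just established. So \(f\) is increasing and \(f(t)\geq f(0)>0\). Consequently \(\Phi(t,x)\geq f(t)>0\) for all \(t\geq0\) and \(x\in\R\).

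Finally, \(\Phi\) is a smooth \(2\pi\)-periodic solution of the heat equation (it is a finite sum of heat modes) and is strictly positive. Lemma~\ref{lem:ColeHopf} therefore shows that \(g=-2\partial_x\log\Phi\) is a global smooth real-valued solution of \eqref{eq:Burgers}.

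The only step that is not routine is the vertex bound \(a_1>4a_2\). Everything else is monotonicity, and this bound is exactly where the restriction \(T<T_0\), through \(D>D_*\), enters.
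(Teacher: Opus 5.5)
Your argument is correct, but it takes a different route from the paper. The paper uses only positivity at \(t=0\), which comes from the factorization in Lemma~\ref{lem:factorization}. It then invokes the maximum principle for the heat equation on \(\T\), giving \(\min_x\Phi(t,x)\geq\min_x\Phi(0,x)>0\) for all \(t\geq0\). You instead work with the explicit profile. Writing \(\Phi\) as a convex quadratic in \(y=\cos x\), the inequality \(a_1>4a_2\) keeps the vertex \(\frac{a_1}{4a_2}e^{3t}\) to the right of \(y=1\) for all time. Hence the spatial minimum is always attained at \(x\in2\pi\Z\), and its value \(1-a_1e^{-t}+a_2e^{-4t}\) is strictly increasing.

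The paper's proof is shorter and structural. It uses nothing about \(\Phi\) beyond being a heat solution with positive data, so it would survive any change of the two-frequency ansatz. Yours avoids the maximum principle entirely and gives more: the location and exact value of the minimum and its strict growth. The cost is an extra coefficient inequality specific to this profile.

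One small correction to your closing remark. The bound \(a_1>4a_2\) needs only \((1+D)c_2>4c\), i.e.\ \(D>4c/c_2-1\approx0.64\), which is much weaker than \(D>D_*\approx1.53\). Also, \(T<T_0\) enters your argument a second time, through Lemma~\ref{lem:factorization}, when you use it to get \(f(0)>0\). So the vertex bound is not ``exactly'' where the restriction is used.
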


\begin{proof}
At time zero, Lemma~\ref{lem:factorization} gives a strictly positive
function on the compact real torus.  The maximum principle for the
heat equation gives
\[
  \min_{x\in\T}\Phi(t,x)
  \geq
  \min_{x\in\T}\Phi(0,x)>0,
  \qquad t\geq0.
\]
Thus strict positivity is preserved for every \(t>0\).
The Cole--Hopf transform is consequently well defined for all time,
and Lemma~\ref{lem:ColeHopf} gives the stated Burgers solution.
\end{proof}

\subsection{Fourier coefficients and the analyticity radius}
\label{sec:Fourier}

At either endpoint, the factorization in \eqref{eq:factorization} separates
two exponential decay scales, \(e^{-n}\) and \(e^{-\rho_s n}\).  The
following identity converts those scales into exact Fourier coefficients;
the first will determine the analyticity radius:
\begin{equation}\label{eq:Fourier-identity}
  -2\partial_x\log(\cosh\rho-\cos x)
  =
  -4\sum_{n\geq1}e^{-\rho n}\sin(nx),
  \qquad \rho>0.
\end{equation}
To see this, put \(q=e^{-\rho}\).  Then
\[
  \cosh\rho-\cos x
  =
  \frac{(1-qe^{ix})(1-qe^{-ix})}{2q},
\]
and \(0<q<1\).  The absolutely convergent expansions
\[
  \log(1-qe^{\pm ix})
  =-
  \sum_{n\geq1}\frac{q^n e^{\pm inx}}n
\]
may therefore be differentiated term by term.  Adding the two
derivatives and multiplying by \(-2\) gives
\(-4\sum_{n\geq1}q^n\sin(nx)\), which is
\eqref{eq:Fourier-identity}.

\begin{proof}[Proof of Theorem~\ref{thm:main}]
Lemma~\ref{lem:ColeHopf} and Proposition~\ref{prop:shear} provide two
successive lifts: the Cole--Hopf transform
takes the positive heat profile to a Burgers solution, and the invariant
cyclic shear takes that scalar solution to the vector model.  Specifically,
take \(\Phi\) from \eqref{eq:Phi}, define \(g\) by \eqref{eq:ColeHopf}, and
form \(u\) by \eqref{eq:shear}.  By Proposition~\ref{prop:positive-Phi},
\(g\) is a global smooth Burgers solution, and
Proposition~\ref{prop:shear} then implies that \(u\) is a global smooth
divergence-free solution of
\eqref{eq:model}.  Since \(g\) is the derivative of a periodic
function, both \(g\) and \(u\) have mean zero.
Integrating \eqref{eq:energy} gives
\eqref{eq:main-vector-energy}.

For \(s\in\{0,T\}\), Lemma~\ref{lem:factorization} and
\eqref{eq:Fourier-identity} give
\begin{equation}\label{eq:g-series}
  g(s,x)
  =
  -4\sum_{n\geq1}
  \left(e^{-n}+e^{-\rho_s n}\right)\sin(nx).
\end{equation}
With the convention
\[
  g(x)=\sum_{n\in\Z}\widehat g(n)e^{inx},
\]
we therefore have
\begin{equation}\label{eq:g-coeff}
  \widehat g(s,n)
  =
  2i\left(e^{-n}+e^{-\rho_s n}\right),
  \qquad n\geq1.
\end{equation}
Under the cyclic identification in \eqref{eq:shear}, the coefficient
\(\widehat g(s,n)\) lies in the \(e_{j+1}\) component of
\(\widehat u(s,ne_j)\).  For each
\(s\in\{0,T\}\), the construction therefore gives some
\(\rho_s>1\) such that
\begin{equation}\label{eq:exact-tail}
  -i e_{j+1}\cdot\widehat u(s,ne_j)
  =
  2\left(e^{-n}+e^{-\rho_s n}\right)
  \geq2e^{-n},
  \qquad n\geq1,\quad j=1,2,3.
\end{equation}

It remains to identify the radius.  All nonzero Fourier modes of the
cyclic shear lie on the three coordinate axes.  Because \(\rho_s>1\),
formula \eqref{eq:g-coeff} gives, for \(0\leq\sigma<1\),
\[
  \sum_{n\geq1}
  e^{2\sigma n}\abs{\widehat g(s,n)}^2
  \leq
  C\sum_{n\geq1}e^{-2(1-\sigma)n}
  <\infty.
\]
The negative frequencies obey the same estimate because \(g\) is
real-valued.  Hence the full weighted square sum for \(u\) is finite
for every \(\sigma<1\).  At \(\sigma=1\), however,
\eqref{eq:exact-tail} implies that the weighted coefficients along
each positive coordinate ray do not even tend to zero.  The
definition \eqref{eq:radius} therefore gives
\eqref{eq:same-radius}.
\end{proof}

\section{A projected SQG model: energy cancellation and a sparse Fourier cascade}
\label{sec:SQG}

\subsection{The projected equation and its energy law}

Fix \(1<\alpha<2\).
The SQG construction uses a different mechanism: a sparse projection that
supports an infinite Fourier cascade.  It is logically independent of the
construction in Section~\ref{sec:model}.  We work
with \eqref{eq:projected-SQG}; for mean-zero solutions, taking
\(\Gamma=\Z^2\setminus\{0\}\) recovers the usual dissipative SQG equation,
whereas a sparse set \(\Gamma\) isolates the cascade geometry developed below.

Before constructing the cascade, we verify that the projection retains the
two structural features needed later: invariance of the selected Fourier
range and exact scalar energy cancellation.
\begin{proposition}
\label{prop:SQG-energy}
Suppose that
\(\theta(0)=\Pi_\Gamma\theta(0)\).  Then every real-valued smooth
solution of
\eqref{eq:projected-SQG}, throughout its interval of existence, satisfies
\[
  \theta(t)=\Pi_\Gamma\theta(t)
\]
and
\begin{equation}\label{eq:SQG-energy}
  \frac12\frac{d}{dt}\norm{\theta(t)}_{L^2(\T^2)}^2
  +
  \norm{\Lambda^{\alpha/2}\theta(t)}_{L^2(\T^2)}^2
  =0.
\end{equation}
\end{proposition}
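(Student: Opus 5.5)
The plan has two steps: first show that \(\Pi_\Gamma\theta\) is preserved, then pair the equation with \(\theta\) and use the divergence-free structure of the SQG velocity. For invariance, set \(\eta=(I-\Pi_\Gamma)\theta\). Because \(\Pi_\Gamma\) is a Fourier multiplier, it commutes with \(\Lambda^\alpha\) and with \(\partial_t\). Since \(\Pi_\Gamma^2=\Pi_\Gamma\), the right-hand side \(\mathcal Q_\Gamma(\theta,\theta)\) lies in \(\operatorname{Ran}\Pi_\Gamma\), so applying \(I-\Pi_\Gamma\) to \eqref{eq:projected-SQG} gives the linear equation
\[
  \partial_t\eta+\Lambda^\alpha\eta=0,\qquad \eta(0)=0.
\]
Coefficientwise this reads \(\widehat\eta(t,k)=e^{-t|k|^\alpha}\widehat\eta(0,k)=0\); equivalently, the energy estimate \(\frac{d}{dt}\|\eta\|_2^2\le0\) forces \(\eta\equiv0\) on the whole existence interval. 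Hence \(\theta(t)=\Pi_\Gamma\theta(t)\).

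For the energy identity, pair \eqref{eq:projected-SQG} with \(\theta\) in \(L^2\). The time derivative term is \(\frac12\frac{d}{dt}\|\theta\|_2^2\). Since \(\Lambda^{\alpha/2}\) is self-adjoint, the dissipative term gives \(\langle\Lambda^\alpha\theta,\theta\rangle=\|\Lambda^{\alpha/2}\theta\|_2^2\).

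For the nonlinear term, write \(v=\nabla^\perp\Lambda^{-1}\Pi_\Gamma\theta\), which is real-valued and divergence free. The multiplier \(\Lambda^{-1}\) is harmless here because \(\Pi_\Gamma\theta\) has no zero mode, as \(0\notin\Gamma\). Using that \(\Pi_\Gamma\) is self-adjoint and that \(\Pi_\Gamma\theta=\theta\), we can move the projection onto \(\theta\):
\[
  \langle\mathcal Q_\Gamma(\theta,\theta),\theta\rangle=-\int v\cdot\nabla\theta\,\theta\,dx=-\frac12\int v\cdot\nabla(\theta^2)\,dx=\frac12\int(\operatorname{div}v)\,\theta^2\,dx=0.
\]
Combining the three terms gives \eqref{eq:SQG-energy}.

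The only real care points concern reality and the zero mode. The projected function must stay real-valued, which holds because \(\Gamma=-\Gamma\) makes \(\Pi_\Gamma\) preserve real-valuedness; this is what lets us write \(\theta\,\nabla\theta=\frac12\nabla\theta^2\) with the real pairing. The integrations by parts are justified by smoothness and periodicity. The step most worth checking is the invariance argument: it uses that \(\mathcal Q_\Gamma\) already carries an outer \(\Pi_\Gamma\), so no nonlinear feedback enters the equation for \(\eta\).
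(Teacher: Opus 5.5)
Your proposal is correct and follows the paper's proof essentially step for step. The paper also applies \(\mathrm{Id}-\Pi_\Gamma\) to reduce the complementary modes to the homogeneous fractional heat equation with zero data. It then pairs with \(\theta\), removes the outer projection using self-adjointness and \(\theta=\Pi_\Gamma\theta\), and kills the nonlinear term with \(\operatorname{div}(\nabla^\perp\Lambda^{-1}\theta)=0\); your remarks on the zero mode and real-valuedness just make explicit what the paper leaves implicit.
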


\begin{proof}
The multipliers \(\Lambda^\alpha\) and \(\Pi_\Gamma\) commute, while
the nonlinear term in \eqref{eq:projected-SQG} lies in the range of
\(\Pi_\Gamma\).  After applying \(\mathrm{Id}-\Pi_\Gamma\), the complementary
Fourier modes therefore satisfy the homogeneous fractional heat
equation with zero initial data.  This proves the range invariance.

Set
\[
  v_\theta=\nabla^\perp\Lambda^{-1}\theta.
\]
Then \(\operatorname{div}v_\theta=0\).  Since
\(\theta=\Pi_\Gamma\theta\) and \(\Pi_\Gamma\) is self-adjoint,
\begin{align*}
  \int_{\T^2}
  \Pi_\Gamma(v_\theta\cdot\nabla\theta)\theta\,dx
  &=
  \int_{\T^2}
  (v_\theta\cdot\nabla\theta)\theta\,dx\\
  &=
  \frac12
  \int_{\T^2}
  v_\theta\cdot\nabla(\theta^2)\,dx
  =0.
\end{align*}
The last equality follows from periodicity and
\(\operatorname{div}v_\theta=0\).
The dissipative term contributes
\(\norm{\Lambda^{\alpha/2}\theta}_{L^2}^2\).  Taking the scalar
\(L^2\) inner product of the equation with \(\theta\) proves
\eqref{eq:SQG-energy}.
\end{proof}

For
\[
  Q_{\rm SQG}(\theta,\theta)
  =
  -
  \left(\nabla^\perp\Lambda^{-1}\theta\right)
  \cdot\nabla\theta,
\]
the projected nonlinearity in \eqref{eq:projected-SQG} can be written as
\begin{equation}\label{eq:QGamma-QSQG}
  \mathcal Q_\Gamma(h,h)
  =
  \Pi_\Gamma
  Q_{\rm SQG}(\Pi_\Gamma h,\Pi_\Gamma h).
\end{equation}
In particular, if \(h=\Pi_\Gamma h\), then
\[
  \mathcal Q_\Gamma(h,h)
  =
  \Pi_\Gamma Q_{\rm SQG}(h,h).
\]
Thus the projection restricts the input and output modes but leaves
the SQG multiplier unchanged on interactions retained by the projection.

For an output \(k\in\Gamma\), the ordered contribution of a pair
\((p,q)\), \(p+q=k\), is
\[
  \frac{p^\perp\cdot q}{\abs{p}}
  \widehat f(p)\widehat f(q).
\]
The same unordered pair also occurs in the reverse order, with
coefficient \((q^\perp\cdot p)/\abs{q}\).  Averaging the two
representations gives the symmetrized Fourier formula
\begin{equation}\label{eq:SQG-symbol}
  \widehat {\mathcal Q_\Gamma(f,f)}(k)
  =
  \frac12
  \sum_{\substack{p+q=k\\p,q\in\Gamma}}
  (p^\perp\cdot q)
  \left(
  \frac1{\abs{p}}-\frac1{\abs{q}}
  \right)
  \widehat f(p)\widehat f(q),
  \qquad k\in\Gamma.
\end{equation}
The Fourier coefficient is zero for \(k\notin\Gamma\).
Here we used \(q^\perp\cdot p=-p^\perp\cdot q\).  The symmetrized
multiplier is therefore positive whenever
\[
\abs{p}<\abs{q},
\qquad
p^\perp\cdot q>0,
\]
since both factors in
\[
(p^\perp\cdot q)
\left(
\frac1{\abs{p}}-\frac1{\abs{q}}
\right)
\]
are then positive.

\subsection{The Fourier-mode recurrence and its spectrum}

The two families of integer modes are initialized by
\begin{equation}\label{eq:UV-initial-data}
 V_1=(1,0),\qquad U_2=(1,1),\qquad V_2=(0,2).
\end{equation}
For \(n\geq2\), they are defined recursively by
\begin{align}
 U_{n+1}&=U_n+V_n,\label{eq:U-relation}\\
 V_{n+1}&=U_n+V_{n-1}.\label{eq:V-relation}
\end{align}
Every mode lies in \(\Z^2\), and
\eqref{eq:U-relation}--\eqref{eq:V-relation} are the two generating Fourier
relations for the cascade.

For the spectral analysis, encode the recurrences by the state matrix
\begin{equation}\label{eq:mode-state}
 \mathbf K_n=
 \begin{pmatrix}V_{n-1}\\ U_n\\ V_n\end{pmatrix},
 \qquad
 A=
 \begin{pmatrix}
 0&0&1\\
 0&1&1\\
 1&1&0
 \end{pmatrix}.
\end{equation}
Here \(\mathbf K_n\) is a three-component column whose entries lie in
\(\Z^2\).  The recurrences are equivalent to
\begin{equation}\label{eq:mode-state-recurrence}
 \mathbf K_{n+1}=A\mathbf K_n.
\end{equation}
With \(z_0=(1,1,0)^{\mathsf T}\), the initial state in
\eqref{eq:UV-initial-data} satisfies
\begin{equation}\label{eq:mode-state-lift}
 \mathbf K_2=(z_0,Az_0),
 \qquad
 \mathbf K_n=(A^{n-2}z_0,A^{n-1}z_0).
\end{equation}
Here we identify \(\mathbf K_n\) with the \(3\times2\) matrix whose rows
are three vector entries.  Thus the two columns in
\eqref{eq:mode-state-lift} are their first and second spatial coordinates.

The characteristic polynomial of \(A\) is
\begin{equation}\label{eq:P}
 P(z)=z^3-z^2-2z+1.
\end{equation}
It is irreducible over \(\mathbb Q\) by the rational-root test.  The sign
computations
\[
 \begin{gathered}
 P(-5/4)=-1/64<0<P(-1),\qquad P(0)>0>P(1),\\
 P(9/5)=-1/125<0<P(11/6)=29/216
 \end{gathered}
\]
locate its three roots, which we label so that
\begin{equation}\label{eq:root-intervals}
 -\frac54<\nu<-1,
 \qquad 0<\omega<1,
 \qquad \frac95<\lambda<\frac{11}{6}.
\end{equation}
Put \(\mu=\abs\nu\).  Since
\(\mu^2<25/16<9/5<\lambda\),
\begin{equation}\label{eq:spectral-gap}
 0<\omega<1<\mu<\sqrt\lambda<\lambda.
\end{equation}
Thus \(\lambda\) is the dominant eigenvalue of the recurrence matrix \(A\).
Put
\begin{equation}\label{eq:gamma}
 \gamma=\frac{\log\mu}{\log\lambda}<\frac12.
\end{equation}
We use the reference scale
\begin{equation}\label{eq:Rn}
 R_n:=\lambda^n.
\end{equation}

Throughout, a triad means a relation \(p+q=k\) among three frequencies
\(p,q,k\in\mathbb Z^2\), with \(k\) distinguished as the output and
\(p,q\) regarded as an unordered pair of inputs.

To turn the recurrence into an isolated cascade, we must both rule out
unintended additive relations and quantify the geometry of the intended
ones.  The next lemma provides these facts simultaneously; its estimates
will determine the size of the SQG multiplier and the exponential weights
used in the comparison argument.

\begin{lemma}\label{lem:geometry}
There is an integer \(N_*\) such that the following holds.  For
\[
 \Gamma_+(N_*)
 =\{U_n,V_n:n\geq N_*\},
\]
all modes are distinct, lie in a fixed acute cone, and satisfy
\[
 \abs{U_n}+\abs{V_n}\approx\lambda^n.
\]
Every relation \(p+q=k\) with \(p,q,k\in\Gamma_+(N_*)\) is one of
\eqref{eq:U-relation} or \eqref{eq:V-relation}, with all three modes in
\(\Gamma_+(N_*)\).  This absence of any other such relation is what we call
additive separation.

For either of these two triads, writing \(p+q=k\) and taking \(n\) to be
the larger input generation,
\begin{align}
 \abs{p}&{\approx}\abs{q}{\approx}\abs{k}{\approx}\lambda^n,
 \label{eq:comparable}\\
 \abs{(p^\perp\cdot q)
 (\abs{p}^{-1}-\abs{q}^{-1})}
 &{\approx}\mu^n{\approx}\abs{k}^\gamma,
 \label{eq:multiplier-size}\\
 0\leq\abs{p}+\abs{q}-\abs{k}
 &\leq C(\mu^2/\lambda)^n.
 \label{eq:defect}
\end{align}
The analogous multiplier coefficients obtained by treating \(p\) or \(q\)
as the output are also comparable to \(\mu^n\).
\end{lemma}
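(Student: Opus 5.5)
The plan is to diagonalize \(A\) and write every mode in spectral form:
\[
 U_n=\lambda^n a+\nu^n b+\omega^n c,\qquad
 V_n=\lambda^n a'+\nu^n b'+\omega^n c',
\]
with fixed vectors \(a,a',b,b',c,c'\in\R^2\).

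\textbf{Spectral form and the dominant direction.} Since \(P\) is irreducible with three distinct real roots, \(A\) is diagonalizable over \(\R\). Write \(A=\sum_\chi \chi\,\Pi_\chi\) with rank-one spectral projections \(\Pi_\chi\) for \(\chi\in\{\lambda,\nu,\omega\}\). Then \eqref{eq:mode-state-lift} gives \(\mathbf K_n=\sum_\chi\chi^{n-2}\Pi_\chi(z_0,Az_0)\). The \(\lambda\)-eigenvector of \(A\) has positive entries by Perron--Frobenius (\(A\) is nonnegative and primitive, since \(A^2>0\)). Hence \(a'=\kappa a\) for some \(\kappa>0\), with \(a=\Pi_\lambda\)-component of the data. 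I must check \(a\neq0\) and that \(a\) has positive coordinates. Irreducibility of \(P\) implies that the orbit of each column of \((z_0,Az_0)\) has a nonzero component along every eigenvector, and for the positive columns these components are positive. Thus \(|U_n|\approx|V_n|\approx\lambda^n\), and all modes eventually lie in a thin cone around \(a\), which is acute since \(a\) is in the open first quadrant.

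\textbf{Triad estimates.} For \(U_{n+1}=U_n+V_n\), the inputs are \(p=U_n\) and \(q=V_n\). Their angle is \(|p^\perp\cdot q|/(|p||q|)\), and \(p^\perp\cdot q=\lambda^n\nu^n(a^\perp\cdot b'-\kappa\, b\cdot a^{\perp}\cdots)+O(\lambda^n\omega^n+\mu^{2n})\). So \(|p^\perp\cdot q|\approx\lambda^n\mu^n\), provided a certain \(2\times2\) determinant combining \(a\) with the \(\nu\)-parts is nonzero. I will verify this determinant numerically or algebraically, using that \(\nu\) is Galois-conjugate to \(\lambda\) and the coefficient is a nonzero element of the cubic field. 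Next, \(|p|^{-1}-|q|^{-1}\approx(|q|-|p|)/\lambda^{2n}\), and \(|q|-|p|=(\kappa-1)|a|\lambda^n+O(\mu^n)\). Here I need \(\kappa\neq1\), which holds because \(\kappa\) is irrational or can be computed from the eigenvector \((x,y,z)\) of \(A\); unequal leading lengths is exactly this. The product is then \(\approx\lambda^n\mu^n\cdot\lambda^{-n}=\mu^n\), and \(\mu^n=(\lambda^n)^\gamma\approx|k|^\gamma\). For the deficit, two vectors of lengths \(\approx\lambda^n\) at angle \(\phi\approx(\mu/\lambda)^n\) give \(|p|+|q|-|p+q|\approx\lambda^n\phi^2=(\mu^2/\lambda)^n\). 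The second relation \(V_{n+1}=U_n+V_{n-1}\) is treated identically, with the generation-shifted leading coefficients \(1\) and \(\kappa/\lambda\). Their ratio is again \(\neq1\), which I must check. The claim with \(p\) or \(q\) as output follows because \(k^\perp\cdot q=p^\perp\cdot q\) and the length differences stay \(\approx\lambda^n\).

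\textbf{Additive separation (main obstacle).} Suppose \(p+q=k\) with generations \(i\le j\) and output generation \(m\). Leading terms give \(\lambda^{i}s_1+\lambda^{j}s_2=\lambda^m s_3\), where \(s_\ast\in\{1,\kappa\}\) times \(a\), up to \(O(\mu^{j})\) errors. Dividing by \(\lambda^j\) and letting \(N_*\) be large forces \(m-j\in\{0,1\}\) and \(j-i\) bounded by a fixed constant. This leaves finitely many shape patterns, in each of which the coefficient identity \(\epsilon_1\lambda^{-d}+\epsilon_2=\lambda^{e}\epsilon_3\) (with \(\epsilon_\ast\in\{1,\kappa\}\)) must hold exactly. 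This is because the error is \(O((\mu/\lambda)^j)\to0\) while the identity involves fixed algebraic numbers. Each such exact identity is a linear relation over \(\mathbb Q\) in the field \(\mathbb Q(\lambda)\). Applying the Galois conjugation \(\lambda\mapsto\nu\), or equivalently using the full integer equation in the eigenbasis, the identity must also hold for the \(\nu\)- and \(\omega\)-components. In that case the integer relation \(p+q=k\) is equivalent to a polynomial identity in \(A\) applied to \(z_0\), and only \eqref{eq:U-relation}--\eqref{eq:V-relation} survive. For the finitely many patterns I will check this directly: compare the three eigen-coordinates, reduce to a relation \(x^{d}\cdot(\cdots)\equiv0\bmod P\), and rule it out by degree or irreducibility. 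Distinctness of modes follows similarly from growth and the fact that \(\kappa\) is not a power of \(\lambda\). The bookkeeping of these finitely many cases, and confirming the nondegeneracy constants \(\kappa\neq1,\lambda^{\pm1}\) and the nonzero \(\nu\)-determinant, is where I expect the real work.
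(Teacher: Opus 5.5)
Your route is essentially the paper's. You expand $U_n,V_n$ spectrally along $u_\lambda=(1,\lambda)$ and $u_\nu=(1,\nu)$, with leading ratio $\kappa=\lambda-1$. You then reduce a candidate relation to finitely many leading-coefficient patterns, decided by whether $P$ divides $z^d\Psi(z)$. The multiplier comes from a determinant $\approx(\lambda\mu)^n$ together with unequal leading lengths, and the deficit from the angle identity. The computations you defer do go through. The $\lambda^n\nu^n$ coefficient of $\det(U_n,V_n)$ is a nonzero multiple of $(\nu-\lambda)\det(u_\lambda,u_\nu)=(\nu-\lambda)^2$, because both $\nu$-components are multiples of $u_\nu$. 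In the finite check, the only patterns with vanishing remainder are the three ordered forms of \eqref{eq:U-relation}--\eqref{eq:V-relation}.

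One step is wrong as stated. Dividing by $\lambda^j$ does not by itself bound the lag $d=j-i$. Suppose $m=j$ and the generation-$j$ input lies in the same family as the output. Then the normalized relation becomes $\epsilon_1\lambda^{-d}=O((\mu/\lambda)^j)$, which allows $d$ to grow linearly in $j$. This configuration therefore escapes your finite list of shape patterns. It must be excluded using the exact identity instead: same family and same generation means $q=k$, hence $p=0$, which is impossible. The paper makes exactly this observation (``the case $a=c$ is impossible already in the exact vector identity'') before invoking the algebraic gaps. A smaller slip: $A^2$ is not positive, since its $(1,3)$ entry is $0$. However, $A^4$ is positive, so primitivity and your Perron--Frobenius argument survive.
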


\begin{proof}
\noindent\emph{Step 1: spectral form.}
Recall that \(\nu\), \(\omega\), and \(\lambda\) denote, respectively,
the negative root, the smaller positive root, and the larger positive root
of \(P\), and hence are the three eigenvalues of \(A\).
Their locations are given by \eqref{eq:root-intervals}; numerically,
\[
 \nu\approx-1.24698,\qquad
 \omega\approx0.44504,\qquad
 \lambda\approx1.80194.
\]
Since \(\det[z_0,Az_0,A^2z_0]=-1\), the vector \(z_0\) is cyclic.  As
\(P\) is separable, its projection onto each eigenspace is nonzero.  For a
root \(\zeta\) of \(P\), use the right eigenvector
\[
 r(\zeta)=\left(\frac{\zeta-1}{\zeta},1,\zeta-1\right).
\]
With \(u_\zeta=(1,\zeta)\) and
\[
 \varphi_K(\zeta)=
 \begin{cases}
 1,&K=U,\\
 \zeta-1,&K=V.
 \end{cases}
\]
the eigenspace decomposition of \(z_0\) and
\eqref{eq:mode-state-lift} give, for \(K=U,V\),
\begin{equation}\label{eq:mode-spectral-expansion}
 K_n=c_\lambda\varphi_K(\lambda)\lambda^n u_\lambda
     +c_\nu\varphi_K(\nu)\nu^n u_\nu
     +O(\omega^n).
\end{equation}
Here \(O(\omega^n)\) denotes a vector whose Euclidean
norm is bounded by \(C\omega^n\), where \(C\) is independent of \(n\).
Moreover, all
\(c_\zeta\ne0\), and fixed powers of \(\zeta\) have been absorbed into
these constants.
Because \(u_\lambda=(1,\lambda)\) lies in the positive quadrant and
\(\lambda>\mu>\omega\), the leading \(\lambda^n\) term implies, after increasing
\(N_*\), that all modes lie in a fixed acute cone and
\[
 \abs{U_n}+\abs{V_n}{\approx}\lambda^n.
\]

\smallskip
\noindent\emph{Step 2: additive separation.}
Set
\[
 \kappa_U=1,
 \qquad \kappa_V=\lambda-1=:a_*.
\]
The rational bounds in \eqref{eq:root-intervals} give
\begin{equation}\label{eq:generation-reduction-inequalities}
 (\lambda-1)\lambda^2>2,
 \quad \lambda^{-1}<\lambda-1,
 \quad (\lambda-1)\lambda>1,
 \quad \lambda^{-3}<2-\lambda,
 \quad \lambda^{-2}<\lambda(\lambda-1)-1.
\end{equation}
All but the fourth follow directly from \(\lambda>9/5\).  The fourth
follows because \(x^3(2-x)\) decreases on \([9/5,11/6]\) and has value
\(1331/1296>1\) at \(x=11/6\).

Write \(K_n^U=U_n\) and \(K_n^V=V_n\).  Suppose that
\(K_i^a+K_j^b=K_m^c\), where
\(a,b,c\in\{U,V\}\).  Thus \(a\) and \(b\) record the
families of the two input modes, while \(c\) records the family of the
output mode.  Assuming \(i\geq j\), put \(d=i-j\), the generation lag
between the inputs.  The cone and size estimates give
\(\abs{m-i}\leq C_0\) for a fixed constant \(C_0\).
To derive the normalized leading relation explicitly, write
\eqref{eq:mode-spectral-expansion} in the form
\[
 K_h^e=c_\lambda\kappa_e\lambda^h u_\lambda+E_h^e,
 \qquad \abs{E_h^e}\leq C\mu^h,
 \qquad e\in\{U,V\}.
\]
Here the \(\nu^h\) and \(\omega^h\) terms are included in the remainder,
since \(\abs{\nu}=\mu>\omega\).  Substitution into the exact relation
\(K_i^a+K_j^b=K_m^c\) gives
\[
 c_\lambda
 \bigl(\kappa_a\lambda^i+\kappa_b\lambda^j-\kappa_c\lambda^m\bigr)
 u_\lambda=E_m^c-E_i^a-E_j^b.
\]
Take \(L_\lambda(x,y)=x\), so that \(L_\lambda(u_\lambda)=1\).
Applying this linear functional yields
\[
 c_\lambda
 \bigl(\kappa_a\lambda^i+\kappa_b\lambda^j-\kappa_c\lambda^m\bigr)
 =O(\mu^i+\mu^j+\mu^m)=O(\mu^i).
\]
Indeed, \(j\leq i\), \(\mu>1\), and \(\abs{m-i}\leq C_0\) imply
\[
 \mu^j\leq\mu^i,
 \qquad \mu^m\leq\mu^{C_0}\mu^i.
\]
Dividing by \(c_\lambda\lambda^i\), with \(c_\lambda\ne0\), and using
\(j-i=-d\), we obtain
\begin{equation}\label{eq:normalized-leading-relation}
 \kappa_a+\lambda^{-d}\kappa_b-\lambda^{m-i}\kappa_c
 =O((\mu/\lambda)^i).
\end{equation}
The error is uniform even when \(d\) depends on \(i\).
Increase \(N_*\), if necessary, so
that this error is smaller than half of every strict algebraic gap used
below.  The inequalities in
\eqref{eq:generation-reduction-inequalities} then imply
\[
 m\in\{i,i+1\},
 \qquad
 \begin{cases}
 m=i: & c=U,\ a=V,\ 0\leq d\leq2,\\
 m=i+1: & 0\leq d\leq1.
 \end{cases}
\]
Indeed, \(m\leq i-1\) gives output at most \(\lambda^{-1}<a_*\), while
\(m\geq i+2\) gives output at least \(a_*\lambda^2>2\).  Suppose
\(m=i\).  The case \(a=c\) is impossible already in the exact vector
identity, because cancellation of \(K_i^a=K_m^c\) would give
\(K_j^b=0\).  The ordering
\(\kappa_V=a_*<\kappa_U=1\) then forces \((a,c)=(V,U)\), and
\(1-a_*=2-\lambda>\lambda^{-3}\) excludes \(d\geq3\).  If \(m=i+1\),
the inequality \(1+\lambda^{-2}<\lambda a_*\) excludes \(d\geq2\).

It remains to test finitely many leading \(\lambda^i\) coefficients.  Put
\(\sigma=m-i\in\{0,1\}\), the output-generation offset, and define
\[
 \Psi_{\sigma,d}^{a,b,c}(z)
 =\varphi_a(z)+z^{-d}\varphi_b(z)-z^\sigma\varphi_c(z).
\]
By \eqref{eq:mode-spectral-expansion}, a candidate relation satisfies
\[
 0=c_\lambda\lambda^i
 \Psi_{\sigma,d}^{a,b,c}(\lambda)u_\lambda+O(\mu^i).
\]
Thus \(\Psi_{\sigma,d}^{a,b,c}(\lambda)\) is the normalized coefficient
of the leading \(\lambda^i u_\lambda\) component of the candidate relation.
Since \(P\) is the minimal polynomial of \(\lambda\), this leading
coefficient vanishes precisely when the remainder of
\(z^d\Psi_{\sigma,d}^{a,b,c}(z)\) modulo \(P(z)\) is zero.  The finite
polynomial divisions give the following complete list; the entries in the
last column are ordered as \((a,b,d,c)\).
\begin{center}
\begin{tabular}{lll}
\textit{case} & \textit{lag range} & \textit{zero remainders} \\
same-generation output & \(0\leq d\leq2\) & none \\
next-generation output & \(0\leq d\leq1\)
& \((U,V,0,U),(V,U,0,U),(U,V,1,V)\).
\end{tabular}
\end{center}
The three ordered forms correspond to the two relations
\eqref{eq:U-relation}--\eqref{eq:V-relation}.  Every other candidate has
a nonzero \(\lambda^i u_\lambda\) term dominating its \(O(\mu^i)\)
remainder, hence cannot occur for large \(i\).  Increasing \(N_*\) removes
all exceptional generations.  The same leading-order comparison makes the modes
distinct: \(\kappa_U\ne\kappa_V\) separates one generation, and
\(\lambda a_*>1\) separates adjacent generations.

\smallskip
\noindent\emph{Step 3: generating coefficients and the triangle-inequality deficit.}
For vectors \(p=(p_1,p_2)\) and \(q=(q_1,q_2)\) in \(\mathbb R^2\),
we write \(\det(p,q)\) for the determinant of the matrix with columns
\(p\) and \(q\); explicitly,
\[
 \det(p,q):=\det\begin{pmatrix}p_1&q_1\\p_2&q_2\end{pmatrix}
 =p_1q_2-p_2q_1=p^\perp\cdot q,
 \qquad p^\perp=(-p_2,p_1).
\]
In particular, since \(u_\lambda=(1,\lambda)\) and
\(u_\nu=(1,\nu)\),
\[
 \det(u_\lambda,u_\nu)
 =\det\begin{pmatrix}1&1\\\lambda&\nu\end{pmatrix}
 =\nu-\lambda\ne0.
\]
For the input pairs \((U_n,V_n)\) and \((U_n,V_{n-1})\), respectively,
the leading \(\lambda^n\nu^n\) determinant factors are
\[
 \nu-\lambda,
 \qquad
 \frac{\nu-1}{\nu}-\frac{\lambda-1}{\lambda}
 =-\frac{\lambda-\nu}{\lambda\nu}.
\]
Since these factors and \(\det(u_\lambda,u_\nu)=\nu-\lambda\) are nonzero,
in either case
\[
 \abs{\det(p,q)}{\approx}(\lambda\mu)^n.
\]
The normalized leading constants in the length asymptotics of the two
input pairs are \((1,\lambda-1)\) and
\((1,(\lambda-1)/\lambda)\).  Thus the modes are
comparable and
\[
 \left|\frac1{\abs{p}}-\frac1{\abs{q}}\right|{\approx}\lambda^{-n}.
\]
Together with \(\abs{k}{\approx}\lambda^n\), these estimates prove
\eqref{eq:comparable}--\eqref{eq:multiplier-size}, since
\(\mu^n=(\lambda^n)^\gamma\).

Let \(\vartheta_n\) be the angle between the two input vectors \(p\) and
\(q\).  Its cosine is defined by the Euclidean inner product:
\[
 \cos\vartheta_n=\frac{p\cdot q}{\abs{p}\abs{q}}
 =\frac{p_1q_1+p_2q_2}
 {\sqrt{p_1^2+p_2^2}\sqrt{q_1^2+q_2^2}}.
\]
Since the inputs lie in a common acute cone, \(0\leq\vartheta_n<\pi/2\).
Using \(\sin\vartheta_n=\abs{\det(p,q)}/(\abs{p}\abs{q})\), the
determinant estimate gives
\(\vartheta_n=O((\mu/\lambda)^n)\).  The exact identity
\[
 \abs{p}+\abs{q}-\abs{p+q}
 =\frac{2\abs{p}\abs{q}(1-\cos\vartheta_n)}
 {\abs{p}+\abs{q}+\abs{p+q}}
\]
therefore yields
\[
\abs{p}+\abs{q}-\abs{p+q}
 \lesssim\lambda^n\vartheta_n^2
 \lesssim(\mu^2/\lambda)^n,
\]
which proves \eqref{eq:defect}.

\smallskip
\noindent\emph{Step 4: coefficients at the input modes.}
The normalized leading constants for the three mode lengths in the
same-generation and lagged triads are
\[
 (\lambda-1,1,\lambda),
 \qquad
 ((\lambda-1)/\lambda,1,\lambda(\lambda-1)),
\]
respectively.  Each triple is pairwise distinct, so every reciprocal
difference in a feedback coefficient is \({\approx}\lambda^{-n}\).
Multiplication by the determinant
\({\approx}(\lambda\mu)^n\) proves the last assertion.
\end{proof}

Fix \(n_0-1\geq N_*\), and set
\begin{equation}\label{eq:Gamma}
 \Gamma_+=\{U_n,V_n:n\geq n_0-1\},
 \qquad
 \Gamma=\Gamma_+\cup(-\Gamma_+).
\end{equation}
Since the modes \(U_n,V_n\), \(n\geq n_0-1\), are pairwise distinct, this
allows us to define a generation map
\begin{equation}\label{eq:generation-definition}
 {\mathcal{T}_{\mathrm{gen}}}:\Gamma\to\mathbb N,
 \qquad
{\mathcal{T}_{\mathrm{gen}}}(\pm U_n)={\mathcal{T}_{\mathrm{gen}}}(\pm V_n)=n.
\end{equation}
Every signed additive relation in \(\Gamma\) is either a positive relation
from Lemma~\ref{lem:geometry} or one of its rearranged forms.  Indeed, choose a
linear functional strictly positive on the fixed acute cone containing
\(\Gamma_+\).  It excludes any relation with positive modes on one side and
a negative mode on the other.  An all-negative identity becomes
all-positive after multiplication by \(-1\), while every other mixed-sign
identity can be rearranged as \(p+q=k\) with
\(p,q,k\in\Gamma_+\).

\subsection{Duhamel iteration and comparison bounds}

We now convert the geometric estimates of Lemma~\ref{lem:geometry} into
coefficientwise upper and lower bounds.  Three preparations are needed:
a sign convention that makes every generating interaction positive, an
exponential weight that balances the dissipation, and a degree that records
the power of the base amplitude carried by each mode.

For a triad \(p+q=k\), we indicate both the output and the input
frequencies in each interaction coefficient.  The index before the
semicolon denotes the output, and the two indices after it denote the
unordered pair of inputs.  The relations
\(p+q=k\), \(k+(-q)=p\), and \(k+(-p)=q\) have coefficients
\[
 \begin{aligned}
 \mathbf{c}_{k;p,q}
 &=(p^\perp\cdot q)(\abs{p}^{-1}-\abs{q}^{-1}),\\
 \mathbf{c}_{p;k,-q}
 &=(p^\perp\cdot q)(\abs{q}^{-1}-\abs{k}^{-1}),\\
 \mathbf{c}_{q;k,-p}
 &=(p^\perp\cdot q)(\abs{k}^{-1}-\abs{p}^{-1}).
 \end{aligned}
\]
The last two are the feedback coefficients at \(p\) and \(q\).
Exchanging the two inputs leaves each coefficient unchanged.
These coefficients satisfy
\[
 \mathbf{c}_{k;p,q}+\mathbf{c}_{p;k,-q}+\mathbf{c}_{q;k,-p}=0.
\]

Every positive mode has at most one generating triad.  Choose
\(\mathbf{s}_v\in\{-1,1\}\) arbitrarily on generation \(n_0-1\) and at
\(V_{n_0}\).  Next choose the sign at \(U_{n_0}\) using the exceptional
relation
\(U_{n_0}=U_{n_0-1}+V_{n_0-1}\), and then choose all remaining signs
recursively.  More precisely, for each generating triad \(p+q=k\),
once \(\mathbf{s}_p,\mathbf{s}_q\in\{-1,1\}\) have been chosen, set
\[
 \mathbf{s}_k=\operatorname{sgn}(\mathbf{c}_{k;p,q})
 \mathbf{s}_p\mathbf{s}_q.
\]
This procedure is unambiguous because each positive mode has at most one
generating triad and the input generations precede the output generation.
Extend the signs to negative modes by \(\mathbf{s}_{-v}=\mathbf{s}_v\).
Since Lemma~\ref{lem:geometry} gives \(\mathbf{c}_{k;p,q}\ne0\), this
choice yields
\begin{equation}\label{eq:positive-forward-interaction}
 \mathbf{s}_k\mathbf{c}_{k;p,q}\mathbf{s}_p\mathbf{s}_q
 =\abs{\mathbf{c}_{k;p,q}}(\mathbf{s}_p\mathbf{s}_q)^2
 =\abs{\mathbf{c}_{k;p,q}}>0
\end{equation}
for every generating triad.  The subspace defined by real and even Fourier
coefficients,
\(\widehat\theta(-v)=\widehat\theta(v)\in\mathbb R\), is invariant under
the projected equation.  On this subspace, set
\(b_v=\mathbf{s}_v\widehat\theta(v)\).  In the \(b_v\)-variables, every
generating coefficient is positive.  The two input amplitudes, however,
need not have fixed signs before their lower bounds have been established.
The transformed feedback signs are
\(\operatorname{sgn}(\mathbf{c}_{p;k,-q}\mathbf{c}_{k;p,q})\) and
\(\operatorname{sgn}(\mathbf{c}_{q;k,-p}\mathbf{c}_{k;p,q})\); one is positive and
one is negative.

For \(\rho>0\), put
\begin{equation}\label{eq:weight}
 \beta=\alpha-\gamma,
 \qquad
 w_v=\abs{v}^\beta e^{-\rho\abs{v}},
 \qquad
 x_v=b_v/w_v.
\end{equation}
For a generating triad whose larger input generation is \(n\),
Lemma~\ref{lem:geometry} gives
\begin{equation}\label{eq:normalized-generating}
 c R_n^\alpha
 \leq
 \abs{\mathbf{c}_{k;p,q}}\frac{w_p w_q}{w_k}
 \leq C R_n^\alpha.
\end{equation}
Thus the normalized generating coefficient has the same order as the
output dissipation.  In contrast, the feedback coefficient in the equation
at \(p\) satisfies
\begin{equation}\label{eq:normalized-feedback}
 \abs{\mathbf{c}_{p;k,-q}}\frac{w_k w_q}{w_p}
 \leq C R_n^\alpha e^{-c\rho R_n},
\end{equation}
because, with \(\delta(p,q)=\abs{p}+\abs{q}-\abs{k}\),
\[
 \abs{q}+\abs{k}-\abs{p}
 =2\abs{q}-\delta(p,q)\geq cR_n
\]
by \eqref{eq:comparable} and \eqref{eq:defect}.
The analogous estimate holds at \(q\).

We next record the power of the base amplitude produced by the generating
dynamics.  Assign to each positive mode a homogeneity degree \(\ell_v\).
Set \(\ell_v=1\) on the two base generations \(n_0-1,n_0\), and impose
\begin{equation}\label{eq:cascade-degree}
 \ell_k=\ell_p+\ell_q
\end{equation}
on every subsequent generating triad.  Relation \eqref{eq:cascade-degree} is not
imposed on the single exceptional base triad
\begin{equation}\label{eq:exceptional-base-triads}
 U_{n_0}=U_{n_0-1}+V_{n_0-1}.
\end{equation}
All three modes in this triad have homogeneity degree one, and we set
\(\ell_{-v}=\ell_v\) on negative modes.  For \(n\geq n_0\),
define the degree vector
\[
 \boldsymbol\ell_n
 =(\ell_{V_{n-1}},\ell_{U_n},\ell_{V_n})^{\mathsf T}.
\]
Beyond the two base generations, the degree vector obeys
\(\boldsymbol\ell_{n+1}=A\boldsymbol\ell_n\).  Since \(A^4\) has strictly
positive entries, the dominant \(\lambda\)-eigenmode controls the growth of
\(A^n\boldsymbol\ell_{n_0}\), and hence
\begin{equation}\label{eq:cascade-degree-growth}
 \ell_v{\approx}\lambda^{n-n_0}
 \quad\bigl(v\in\{U_n,V_n\}\bigr),
 \qquad
 \ell_p{\approx}\ell_q
\end{equation}
for the two inputs of every generating triad of generation \(n\), with
uniform constants, including those for the finitely many initial
generations.

To initialize the first finitely many generations, we compare the rescaled
dynamics with the triangular system obtained by deleting the feedback and
exceptional interactions.  We first record an abstract finite-dimensional
stability lemma.  
For any finite index set \(\mathcal J\) and vector
\(Z=(Z_v)_{v\in\mathcal J}\), write
\[
 \norm{Z}_{\ell^\infty(\mathcal J)}
 :=\max_{v\in\mathcal J}\abs{Z_v}.
\]
The stability argument in Lemma~\ref{lem:finite-depth-initialization} only
uses the components of the perturbation indexed by its finite coordinate
set; those components may depend on auxiliary coordinates outside that set.

\begin{lemma}[Uniform finite-depth stability of triangular systems]
\label{lem:finite-depth-initialization}
Fix \(L\in\mathbb N\), \(S>0\), \(K_0>0\), and \(M>0\).  Consider a
family of finite coordinate sets \(\mathcal I=\mathcal I(n_0,N)\), indexed
by pairs \((n_0,N)\), and suppose that each \(\mathcal I\) is partitioned
into pairwise disjoint relative generations
\(\mathcal I_r\), \(r=-1,0,\ldots,L\), with
\[
 \mathcal I=\bigcup_{r=-1}^{L}\mathcal I_r.
\]

For each \(v\in\mathcal I\), let
\[
 0\leq D_v\leq K_0,
 \qquad \abs{\mathcal A_v}\leq K_0,
 \qquad \iota_v\in\{0,1\}.
\]
Whenever \(\iota_v=1\) and \(v\in\mathcal I_r\), assume that the two input
coordinates satisfy
\[
 p(v),q(v)\in\bigcup_{r'<r}\mathcal I_{r'}.
\]
Define the triangular reference vector field
\begin{equation}\label{eq:abstract-triangular-system}
 (F_0(Y))_v=
 \begin{cases}
  -D_v Y_v+\mathcal A_v Y_{p(v)}Y_{q(v)},&\iota_v=1,\\
  -D_v Y_v,&\iota_v=0.
 \end{cases}
\end{equation}
Let \(Y^0\) solve
\[
 \dot Y^0=F_0(Y^0),
 \qquad Y^0(0)=Y^{\mathrm{in}},
 \qquad \norm{Y^{\mathrm{in}}}_{\ell^\infty(\mathcal I)}\leq K_0.
\]
For \(0<\varepsilon\leq1\), let \(Y^\varepsilon\) have the same initial
data and satisfy, on \(0\leq s\leq S\),
\begin{equation}\label{eq:abstract-perturbed-system}
 \begin{aligned}
  \dot Y^\varepsilon&=F_0(Y^\varepsilon)+G^\varepsilon,
  &Y^\varepsilon(0)&=Y^{\mathrm{in}},\\
  \sup_{0\leq s\leq S}
  \norm{Y^\varepsilon(s)}_{\ell^\infty(\mathcal I)}&\leq M,\\
  \sup_{0\leq s\leq S}
  \norm{G^\varepsilon(s)}_{\ell^\infty(\mathcal I)}&\leq K_0\varepsilon.
 \end{aligned}
\end{equation}
Then there are constants
\[
 M_*=M_*(L,S,K_0),
 \qquad C_*=C_*(L,S,K_0,M),
\]
independent of \(n_0\), \(N\), and \(\varepsilon\), such that
\begin{align}
 \sup_{0\leq s\leq S}\norm{Y^0(s)}_{\ell^\infty(\mathcal I)}
 &\leq M_*,\label{eq:triangular-uniform-bound}\\
 \sup_{0\leq s\leq S}
 \norm{Y^\varepsilon(s)-Y^0(s)}_{\ell^\infty(\mathcal I)}
 &\leq C_*\varepsilon.
 \label{eq:abstract-initialization-convergence}
\end{align}
\end{lemma}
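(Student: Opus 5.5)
The proof goes by induction on the relative generation \(r=-1,0,\ldots,L\). The whole point is that every constant depends only on \(L,S,K_0\) (and \(M\) for \(C_*\)), and never on the cardinality of \(\mathcal I\). This independence is automatic once every estimate is done coordinatewise in \(\ell^\infty\): the equation for \(Y_v\) involves only \(Y_v\) itself and, when \(\iota_v=1\), the two earlier coordinates \(Y_{p(v)},Y_{q(v)}\). No sums over \(\mathcal I\) ever appear.

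\emph{Step 1: the uniform bound \eqref{eq:triangular-uniform-bound}.} Define \(M_{-1}=K_0\), and for \(r\geq0\) set
\[
M_r=K_0+SK_0\bigl(\max_{r'<r}M_{r'}\bigr)^2 .
\]
For \(v\in\mathcal I_r\), Duhamel's formula gives
\[
Y^0_v(s)=e^{-D_vs}Y^{\mathrm{in}}_v+\iota_v\mathcal A_v\int_0^s e^{-D_v(s-\tau)}Y^0_{p(v)}Y^0_{q(v)}\,d\tau .
\]
Since \(D_v\geq0\), the kernel \(e^{-D_v(s-\tau)}\) is at most \(1\). The inputs \(p(v),q(v)\) lie in earlier generations, so by the induction hypothesis \(\abs{Y^0_v(s)}\leq M_r\). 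This applies in particular to \(r=-1\): in \(\mathcal I_{-1}\) no inputs are available, so necessarily \(\iota_v=0\) and \(\abs{Y^0_v(s)}\leq K_0\). The triangular structure also shows that \(Y^0\) exists on all of \([0,S]\) with no blow-up, since each coordinate solves a linear ODE with a forcing that is already known. Setting \(M_*=\max_rM_r\), which depends only on \(L,S,K_0\), proves \eqref{eq:triangular-uniform-bound}.

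\emph{Step 2: the difference estimate \eqref{eq:abstract-initialization-convergence}.} Let \(Z=Y^\varepsilon-Y^0\), so \(Z(0)=0\) and
\[
\dot Z_v=-D_vZ_v+\iota_v\mathcal A_v\bigl(Y^\varepsilon_{p}Y^\varepsilon_q-Y^0_pY^0_q\bigr)+G^\varepsilon_v .
\]
Write the bracket as \(Z_pY^\varepsilon_q+Y^0_pZ_q\), and use the a priori bound \(M\) on \(Y^\varepsilon\) together with the bound \(M_*\) on \(Y^0\). The same Duhamel argument then gives, for \(v\in\mathcal I_r\),
\[
\sup_s\abs{Z_v}\leq SK_0\varepsilon+SK_0(M+M_*)\max_{r'<r}E_{r'},
\]
where \(E_{r'}\) denotes the bound already established on generation \(r'\). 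The base case is \(E_{-1}\leq SK_0\varepsilon\). The recursion therefore closes after \(L+2\) steps with \(E_r\leq C_r\varepsilon\) and \(C_*=\max_rC_r\) depending only on \(L,S,K_0,M\).

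The perturbation \(G^\varepsilon\) may depend on coordinates outside \(\mathcal I\). This causes no difficulty, because only its sup bound is used.

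\emph{Main point to check.} The only delicate issue is to confirm that no estimate secretly uses the size of \(\mathcal I\) or a Gronwall argument over the full vector field. The triangular ordering is exactly what allows each generation to be handled as a scalar linear ODE with known forcing. This replaces Gronwall with a finite recursion of depth \(L+2\), and that recursion yields the uniformity in \(n_0\), \(N\), and \(\varepsilon\).
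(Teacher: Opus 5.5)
Your proof is correct. Step~1 matches the paper's argument; the paper writes the recursion in the monotone form \(M_r=\max\{M_{r-1},K_0+SK_0M_{r-1}^2\}\), which is equivalent to yours.

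Step~2 takes a different route. The paper does not use the triangular ordering for the difference at all. It bounds
\[
\norm{F_0(Y^\varepsilon)-F_0(Y^0)}_{\ell^\infty(\mathcal I)}\leq K_0(1+M+M_*)\norm{E}_{\ell^\infty(\mathcal I)}
\]
and applies Gronwall to the full \(\ell^\infty\) norm. This gives \(C_*=K_0S\exp(K_0(1+M+M_*)S)\), which depends on \(L\) only through \(M_*\). Your generation-by-generation Duhamel recursion avoids Gronwall, and absorbs the damping by using \(e^{-D_v(s-\tau)}\leq1\). It produces a constant of order \(SK_0\sum_{j=0}^{L+1}\bigl(SK_0(M+M_*)\bigr)^j\), which grows geometrically in \(L\). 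That is equally admissible, since \(C_*\) is allowed to depend on \(L\).

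Your stated worry, that a Gronwall argument over the full vector field might secretly depend on the size of \(\mathcal I\), is unfounded. Each coordinate of \(F_0\) involves at most the three coordinates \(v,p(v),q(v)\), with coefficients bounded by \(K_0\). Its \(\ell^\infty\) Lipschitz constant on the relevant balls is therefore uniform, and Gronwall in \(\ell^\infty\) is exactly as uniform in \(n_0\), \(N\), \(\varepsilon\) as your recursion. The triangular structure is genuinely needed only in Step~1: there it supplies the a~priori bound \(M_*\) on \(Y^0\) and its existence on all of \([0,S]\).
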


\begin{proof}
We first bound the reference system by induction over the relative
generations.  At generation \(-1\) there is no generating term, so set
\(M_{-1}=K_0\).  Suppose all generations through \(r-1\) are bounded by
\(M_{r-1}\).  For \(v\in\mathcal I_r\) with \(\iota_v=1\), variation of
constants in \eqref{eq:abstract-triangular-system} gives
\[
 \abs{Y_v^0(s)}
 \leq K_0+K_0\int_0^s
       \abs{Y_{p(v)}^0(\sigma)Y_{q(v)}^0(\sigma)}\,d\sigma
 \leq K_0+SK_0M_{r-1}^2.
\]
For \(\iota_v=0\), the same bound holds with the integral omitted.
The recursion
\[
 M_r=\max\{M_{r-1},K_0+SK_0M_{r-1}^2\},
 \qquad 0\leq r\leq L,
\]
proves
\eqref{eq:triangular-uniform-bound}; the resulting bound is independent of
the initial generation and the truncation.

Put \(E=Y^\varepsilon-Y^0\).  Subtracting the equations coordinatewise gives
\begin{equation}\label{eq:abstract-initialization-difference}
 \begin{aligned}
 \dot E_v={}&-D_v E_v+\mathcal A_v
 \bigl(E_{p(v)}Y_{q(v)}^\varepsilon
       +Y_{p(v)}^0E_{q(v)}\bigr)+G_v^\varepsilon,
 &&\iota_v=1,\\
 \dot E_v={}&-D_v E_v+G_v^\varepsilon,
 &&\iota_v=0,
 \end{aligned}
\end{equation}
In both cases, \(E_v(0)=0\).
On the product of the balls determined by
\eqref{eq:abstract-perturbed-system} and
\eqref{eq:triangular-uniform-bound}, the triangular vector field has the
uniform Lipschitz bound
\[
 \norm{F_0(Y^\varepsilon)-F_0(Y^0)}_{\ell^\infty(\mathcal I)}
 \leq K_0\bigl(1+M+M_*\bigr)
       \norm{E}_{\ell^\infty(\mathcal I)}.
\]
Integrating \eqref{eq:abstract-initialization-difference}, using the forcing
bound, and applying Gronwall's inequality yields
\[
 \sup_{0\leq s\leq S}\norm{E(s)}_{\ell^\infty(\mathcal I)}
 \leq K_0S\exp\!\left(K_0(1+M+M_*)S\right)\varepsilon.
\]
The resulting constants are independent of \(n_0\), \(N\), and
\(\varepsilon\).
\end{proof}

The finite-depth lemma controls a fixed finite number of generations.  The
next result combines this initialization with a Duhamel induction to obtain
coefficientwise upper and lower bounds for the normalized amplitudes
\(x_v(t)\) at every generation.  The constants are uniform in the Galerkin
truncation level \(N\).

\begin{lemma}[Uniform comparison bounds on the sparse cascade]\label{lem:barrier}
Fix \(C_{\rm time}>0\), \(\tau>0\), and
\(\chi\in(\lambda,\lambda^\alpha)\).  There exist constants
\[
 0<\underline c<1<\overline C,
 \qquad 0<\varepsilon_-<\varepsilon_0<\varepsilon_+<1,
\]
depending only on \(\alpha,C_{\rm time},\tau,\chi\) and the fixed mode
recurrence.
For
every \(n_0\) with \(n_0-1\geq N_*\), every
\(1/2\leq\rho\leq1\), and every \(N>n_0\), define \(\Gamma\) by
\eqref{eq:Gamma}.  Use the signed variables
\(b_v=\mathbf{s}_v\widehat\theta(v)\), the weights from
\eqref{eq:weight}, and the amplitude degrees from
\eqref{eq:cascade-degree}.  Set
\begin{equation}\label{eq:finite-truncation}
 \Gamma^{(N)}
 =\{\pm U_n,\pm V_n:n_0-1\leq n\leq N\}.
\end{equation}
All amplitudes below are indexed by \(v\in\Gamma^{(N)}\).  Since
\(x_{-v}=x_v\), it suffices in the proof to consider
\(v\in\Gamma_+\cap\Gamma^{(N)}\).

Prescribe the initial data by
\[
 x_v(0)=
 \begin{cases}
  \varepsilon_0,
  &{\mathcal{T}_{\mathrm{gen}}}(v)\in\{n_0-1,n_0\},\\
  0,&n_0<{\mathcal{T}_{\mathrm{gen}}}(v)\leq N,
 \end{cases}
 \qquad v\in\Gamma^{(N)}.
\]
Let \(x_v(t)\) denote the normalized amplitudes of the corresponding finite
projected solution, and define
\begin{equation}\label{eq:AB}
 A_v=\underline c^{\ell_v-1}\varepsilon_-^{\ell_v},
 \qquad
 B_v=\overline C^{\ell_v-1}\varepsilon_+^{\ell_v}.
\end{equation}
Let
\[
 \delta_n=\tau\chi^{n-n_0}R_n^{-\alpha},
 \qquad
 t_{n_0}=0,
 \qquad
 t_{n+1}=t_n+\delta_n\quad(n\geq n_0).
\]
We call \(t_n\) the activation time of generation \(n\);
\eqref{eq:lower-barrier} asserts that the lower bound persists from
\(t_n\) onward.
If
\(0<T\le C_{\rm time}R_{n_0}^{-\alpha}\), then
\begin{equation}\label{eq:barrier}
 \abs{x_v(t)}\leq B_v
 \qquad(0\leq t\leq T)
\end{equation}
for every \(v\in\Gamma^{(N)}\).  In addition,
\begin{equation}\label{eq:lower-barrier}
 x_v(t)\geq A_v
\end{equation}
for every \(v\in\Gamma^{(N)}\) in the two base generations when
\(0\leq t\leq T\), and for every \(v\in\Gamma^{(N)}\) of generation
\(n>n_0\) when \(t_n\leq t\leq T\).
The choices can be made uniformly for \(1/2\leq\rho\leq1\).
\end{lemma}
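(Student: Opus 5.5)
We will prove the lemma by a continuity (bootstrap) argument on the finite Galerkin system for \((x_v)_{v\in\Gamma^{(N)}}\), which is a finite ODE system with smooth vector field. First we write the equation for \(x_v\) with \(v=k\in\Gamma_+\cap\Gamma^{(N)}\). By additive separation (Lemma~\ref{lem:geometry} and the signed-relation remark after \eqref{eq:Gamma}), the only interactions are the generating triad producing \(k\) and the feedback triads in which \(k\) is an input. In the weighted variables this becomes
\[
 \dot x_k=-\abs{k}^\alpha x_k+\mathbf a_k\,x_{p}x_{q}+\sum \mathbf f_{k}\,x_{k'}x_{q'},
\]
with \(\mathbf a_k=\abs{\mathbf c_{k;p,q}}w_pw_q/w_k\approx R_n^\alpha\) by \eqref{eq:normalized-generating}, and feedback coefficients bounded by \(CR_n^\alpha e^{-c\rho R_n}\) by \eqref{eq:normalized-feedback}. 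There are at most two feedback terms, since \(k\) is an input to at most two generating triads. The exceptional base triad \eqref{eq:exceptional-base-triads} has bounded coefficient at generation \(n_0\) relative to \(R_{n_0}^\alpha\), and it acts only over the time \(T\le C_{\rm time}R_{n_0}^{-\alpha}\).

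We then rescale time by \(s=R_{n_0}^\alpha t\), so the total time is \(S=C_{\rm time}\). We fix a depth \(L\) to be chosen, and treat generations \(n_0-1,\dots,n_0+L\) through Lemma~\ref{lem:finite-depth-initialization}. For these generations \(D_v=\abs{v}^\alpha R_{n_0}^{-\alpha}\) and \(\mathcal A_v\) are bounded by \(K_0(L)\), uniformly in \(n_0\) and \(\rho\in[1/2,1]\). The exceptional triad and the feedback from all higher generations are placed into \(G^\varepsilon\). Under the bootstrap upper bound \(\abs{x_v}\le B_v\), the feedback forcing is bounded by \(e^{-c R_{n_0}}\) times a constant. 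For these finitely many generations we run the triangular reference solution with base data \(\varepsilon_0\). Because all generating coefficients are positive and the data are positive, the reference amplitudes are positive on \([s_n,S]\) and of size \(\approx\varepsilon_0^{\ell_v}\) (products of positive Duhamel integrals). The finite-depth lemma transfers these upper and lower bounds to the true system once \(n_0\) is large. For the finitely many small \(n_0\) we would instead retain the exceptional term in the reference system, or shrink \(\varepsilon_0\) so that its effect is relatively small. This step fixes the constants for generations up to \(n_0+L\).

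For generations \(n>n_0+L\) we argue by induction on \(n\) using Duhamel's formula. \emph{Upper bound:} \(\abs{x_k(t)}\le\int_0^te^{-\abs{k}^\alpha(t-\sigma)}(\mathbf a_kB_pB_q+\text{feedback})\,d\sigma\le (C/c)\,B_pB_q(1+o(1))\). Since \(\ell_k=\ell_p+\ell_q\), this is at most \(B_k=\overline C^{\ell_k-1}\varepsilon_+^{\ell_k}\) provided \(\overline C\ge 2C/c\); here \(\overline C^{\ell_p-1}\overline C^{\ell_q-1}=\overline C^{\ell_k-2}\) leaves exactly one spare factor of \(\overline C\). For the feedback terms we need \(B_{k'}B_{q'}R_n^\alpha e^{-c\rho R_n}\) to be small relative to \(B_k R^\alpha\). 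Because \(\ell\) grows like \(\lambda^{n}\) (see \eqref{eq:cascade-degree-growth}), the factor \(\varepsilon_+^{\ell_{k'}}\) is at worst \(e^{-C\lambda^{n}}\)-sized compared with the weight gain \(e^{-c\rho\lambda^n}\). This balance is delicate and is probably handled by taking \(\overline C\varepsilon_+\) small, which makes the higher-degree terms smaller still.

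\emph{Lower bound:} on \([t_n,T]\) the inputs satisfy \(x_p\ge A_p\) and \(x_q\ge A_q\). This holds for \(t\ge t_{n}\) because the inputs come from earlier generations, which are activated earlier since the \(t_n\) increase. Duhamel from \(t_n\) gives \(x_k(t)\ge \mathbf a_kA_pA_q\,\abs{k}^{-\alpha}(1-e^{-\abs{k}^\alpha(t-t_n)})-\text{(feedback)}\). For \(t\ge t_{n+1}=t_n+\delta_n\), the factor \(\abs{k}^\alpha\delta_n=\tau\chi^{n-n_0}\cdot O(1)\to\infty\) makes the bracket \(\ge1/2\) for large \(n\). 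Between \(t_n\) and \(t_{n+1}\) we only need \(x_k\ge A_k\) from time \(t_{n+1}\), i.e.\ activation of \(k\) at its own \(t_{n+1}\); we will align the generation labels accordingly. The choice \(\chi<\lambda^\alpha\) guarantees \(\sum\delta_n<\infty\) relative to \(R_{n_0}^{-\alpha}\), so activation times stay within \(T\); the choice \(\chi>\lambda\) makes the exponential factor beat polynomial losses. One then needs \(\underline c\le c/(2C)\).

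The main obstacle is closing the bootstrap uniformly. Both the feedback forcing and the exceptional triad must be shown negligible in a way that is compatible with the degree structure \(A_v,B_v\), since the feedback pairs \(x_k x_{q'}\) carry degree \(\ell_k+\ell_{q'}\) that exceeds \(\ell_p\). We plan to use the exponential weight suppression \eqref{eq:normalized-feedback} combined with the relation \(\ell_p\approx\ell_q\). We will also choose \(L\) large so that the finite-depth initialization covers all generations where \(\chi^{n-n_0}\) is not yet large.
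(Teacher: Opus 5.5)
Your architecture matches the paper's: an upper barrier by continuity, Lemma~\ref{lem:finite-depth-initialization} on a fixed number of initial generations, then a Duhamel induction along the activation times. But the steps that carry the weight are missing or would fail as written.

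\textbf{The lower-bound Duhamel step.} Your estimate from \(t_n\) drops the homogeneous term \(e^{-d_k(t-t_n)}x_k(t_n)\), where \(d_k=\abs{k}^\alpha\). Before activation \(x_k\) has no sign. Its generating product \(x_px_q\) can be negative while the inputs are not yet activated, so all you know is \(x_k(t_n)\geq-B_k\). Moreover
\[
 \frac{B_k}{A_k}=\Bigl(\frac{\overline C}{\underline c}\Bigr)^{\ell_k-1}\Bigl(\frac{\varepsilon_+}{\varepsilon_-}\Bigr)^{\ell_k}
\]
grows like \(e^{C\lambda^{n-n_0}}\). The induction closes only with \(e^{-d_k\delta_n}B_k\leq A_k\), as in \eqref{eq:old-value-damped}. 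This is the real role of \(\chi>\lambda\): \(d_k\delta_n\gtrsim\tau\chi^{n-n_0}\) must beat \(\log(B_k/A_k)\lesssim\lambda^{n-n_0}\). It is not about polynomial losses, and \(1-e^{-d_k\delta_n}\geq1-e^{-c_*\tau}\) holds uniformly anyway. This inequality defines the depth \(J\) below which the triangular comparison is genuinely needed. \(J\) must be fixed before \(\varepsilon_0\) is reduced, which forces \(\varepsilon_+/\varepsilon_-\) to be a constant independent of \(\varepsilon_0\); the paper takes \(\varepsilon_+=2\varepsilon_0\), \(\varepsilon_-=2\varepsilon_0/r_{\rm amp}\), \(r_{\rm amp}=8e^{C_b}\). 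Your proposal has neither this inequality nor this order of constants.

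\textbf{The feedback estimate.} The step you leave open is settled by degree additivity, not by the exponential weight. For a triad \(v+u=z\) with \(z\) in a later generation, \(\ell_z=\ell_v+\ell_u\), so
\[
 \frac{B_uB_z}{B_v}=\overline C^{-1}(\overline C\varepsilon_+)^{2\ell_u}.
\]
The higher degree of the feedback product is a gain, and your upper barrier closes once \(q_*=\overline C\varepsilon_+\) is small. For the lower barrier the relevant ratio is \(B_uB_z/A_v\), which carries the loss \(B_v/A_v\), of size \(e^{C\ell_v}\). It is beaten only because \(\ell_u\geq m_\ell\ell_v\) uniformly, which gives a bound by a fixed factor times \([q_*^{2m_\ell}\overline C r_{\rm amp}/\underline c]^{\ell_v}\). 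Balancing \(\varepsilon_+^{\ell}\) against \(e^{-c\rho R_n}\) instead works only when \(\lambda^{n_0}\) is large compared with the barrier constants. So it does not give constants uniform over all \(n_0-1\geq N_*\).

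\textbf{The finite-depth step.} The same issue breaks this step. The exceptional generating term at \(U_{n_0}\) has relative size \(\varepsilon_0\) and is not exponentially small in \(n_0\). With it placed in \(G^\varepsilon\), the \(\ell^\infty\) error \(C_*\varepsilon\) of the lemma in the \(x\)-variables is of order \(\varepsilon_0^2\). That is not small relative to the targets \(A_v\) once \(\ell_v\geq2\), and \(\ell_v\) ranges up to about \(\lambda^J\). The paper instead applies Lemma~\ref{lem:finite-depth-initialization} to \(y_v(s)=\varepsilon_0^{-\ell_v}x_v(R_{n_0}^{-\alpha}s)\). In these variables every remainder monomial, the exceptional generating term and all feedback terms, carries an explicit factor \(\varepsilon_0\). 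This gives \(\varepsilon=\varepsilon_0\) uniformly in \(n_0\), \(N\), \(\rho\), and one concludes via \(C_J\varepsilon_0<5a_J\). Your fallback of shrinking \(\varepsilon_0\) for small \(n_0\) is circular. The threshold beyond which \(e^{-cR_{n_0}}\) is small relative to \(\varepsilon_0^{\ell_v}\) itself depends on \(\varepsilon_0\).
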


\begin{proof}
\noindent\emph{Normalized finite system.}
Write \(d_v=\abs{v}^\alpha\).  For a positive mode in the finite truncation,
the normalized equation has the form
\begin{equation}\label{eq:normalized-ode}
 \dot x_v+d_v x_v
 =a_v x_p x_q
 +\sum_{v+u=z}\varsigma_{vuz}r_{vuz}x_u x_z.
\end{equation}
Here the sum runs over triads \(v+u=z\) with \(v,u,z\in\Gamma_+\), where
\(v\) is an input and \(z\) belongs to a later generation.  There is one
term for each such triad.  Terms involving a mode outside
\(\Gamma^{(N)}\) are omitted.  The generating term \(a_v x_p x_q\) is omitted
when \(v\) has no generating relation in \(\Gamma\).  For each present
generating triad \(p+q=v\), the coefficient is
\[
 a_v=\abs{\mathbf{c}_{v;p,q}}\frac{w_p w_q}{w_v}>0.
\]
For a feedback triad \(v+u=z\), the inputs in the equation at \(v\) are
\(z\) and \(-u\).  We separate the feedback coefficient into its positive
magnitude and its sign by defining
\[
 \begin{aligned}
 r_{vuz}&=\abs{\mathbf{c}_{v;z,-u}}\frac{w_u w_z}{w_v}>0,\\
 \varsigma_{vuz}
 &=\operatorname{sgn}\!\left(
 \mathbf{c}_{v;z,-u}\mathbf{c}_{z;v,u}\right)\in\{-1,1\}.
 \end{aligned}
\]
Indeed, the recursive sign choice gives
\[
 \mathbf{s}_z=\operatorname{sgn}(\mathbf{c}_{z;v,u})
 \mathbf{s}_v\mathbf{s}_u,
 \qquad
 \mathbf{s}_v\mathbf{s}_u\mathbf{s}_z
 =\operatorname{sgn}(\mathbf{c}_{z;v,u}).
\]
Together with \(\mathbf{s}_{-u}=\mathbf{s}_u\) and \(w_{-u}=w_u\), this
shows that the transformed feedback coefficient is exactly
\(\varsigma_{vuz}r_{vuz}\), as used in \eqref{eq:normalized-ode}.

Local theory for ordinary differential equations (ODEs) gives a unique
maximal solution of the finite-dimensional system.  Pairing its Fourier
equations with the conjugate coefficients
gives the exact dissipative \(L^2\) identity.  This bounds every coordinate,
so the polynomial ODE continuation criterion makes the solution global.
Moreover, there are constants
\(c_1,C_1,C_2,c_2>0\), independent of
\(n_0,N\) and \(\rho\), such that
\begin{equation}\label{eq:coefficient-bounds}
 c_1d_v\leq a_v\leq C_1d_v,
 \qquad
 \abs{r_{vuz}}\leq C_2d_ve^{-c_2R_{{\mathcal{T}_{\mathrm{gen}}}(v)}}.
\end{equation}
The lower bound \(c_1d_v\leq a_v\) is used only when the generating
term is present.  The bounds in \eqref{eq:coefficient-bounds} follow from
\eqref{eq:normalized-generating},
\eqref{eq:normalized-feedback}, and the uniform frequency comparability
within each triad.  The recurrence also shows that a mode is an input to at
most two triads whose outputs lie in later generations.

\smallskip
\noindent\emph{Barrier identities and choice of constants.}
For every nonexceptional generating triad, the degree identity gives
\[
 A_k=\underline c A_p A_q,
 \qquad
 B_k=\overline C B_p B_q.
\]
Suppose \(v+u=z\), with \(z\) in a later generation, is a triad in which
\(v\) is an input.  Under the upper barriers,
\begin{equation}\label{eq:upper-feedback-ratio}
 \frac{B_u B_z}{B_v}
 =\overline C^{-1}(\overline C\varepsilon_+)^{2\ell_u}.
\end{equation}
Let \(m_\ell>0\) be such that \(\ell_u\geq m_\ell\ell_v\) whenever
\(u,v\) are the two inputs of such a triad.  Such an \(m_\ell\), independent
of the generation, exists by
\eqref{eq:cascade-degree-growth}.

Choose the constants in the following order.  With \(\tau>0\) and
\(\chi\in(\lambda,\lambda^\alpha)\) fixed, choose
\(\overline C\geq\max\{2,4C_1\}\).  The base damping satisfies
\[
 d_vT\leq C_b
\]
on both base generations, where \(C_b\) depends only on \(\alpha\),
\(C_{\rm time}\), and the fixed mode recurrence.  Put
\(r_{\rm amp}=8e^{C_b}\).  By frequency comparability, there is a constant
\(c_*>0\), depending only on \(\alpha\) and the fixed recurrence, such that,
for an output \(k\) in generation \(n+1\), with \(j=n-n_0\),
\begin{equation}\label{eq:uniform-activation-scale}
 d_k\delta_n\geq c_*\tau\chi^j\geq c_*\tau.
\end{equation}
Since \(a_k/d_k\geq c_1\), integration over one activation interval
gives the uniform lower bound
\begin{equation}\label{eq:c3-definition}
 \int_0^{\delta_n}e^{-d_k(\delta_n-s)}a_k\,ds
 =\frac{a_k}{d_k}(1-e^{-d_k\delta_n})
 \geq c_1(1-e^{-c_*\tau})=:c_3>0.
\end{equation}
Fix \(0<\eta<1/4\), and then choose
\(\underline c\in(0,1)\) so small that
\begin{equation}\label{eq:lower-constant-choice}
 \frac{c_1}{\underline c}>2(1+\eta),
 \qquad
 \frac{c_3}{\underline c}\geq8.
\end{equation}
The parameter \(\varepsilon_0\) will be chosen small below.  Put
\begin{equation}\label{eq:epsilon-choice}
 \varepsilon_+=2\varepsilon_0,
 \qquad \varepsilon_-=\frac{2\varepsilon_0}{r_{\rm amp}}.
\end{equation}

\smallskip
\noindent\emph{Upper barrier.}
Set \(q_*=\overline C\varepsilon_+\).  Taking \(q_*\) sufficiently small in
\eqref{eq:upper-feedback-ratio} gives
\begin{equation}\label{eq:upper-feedback-bound}
 \sum_{v+u=z}\abs{r_{vuz}}B_u B_z
 \leq\eta d_v B_v.
\end{equation}
For a feedback interaction belonging to
\eqref{eq:exceptional-base-triads}, all three homogeneity degrees equal one,
and the ratio becomes
\begin{equation}\label{eq:exceptional-upper-ratio}
 \frac{B_u B_z}{B_v}=\varepsilon_+.
\end{equation}
There is only one such triad, and a further reduction of
\(\varepsilon_+\) makes its contribution satisfy
\eqref{eq:upper-feedback-bound}.
At a hypothetical first contact \(\abs{x_v}=B_v\), the upper right Dini
derivative satisfies, for a non-base mode,
\[
 D^+\abs{x_v}
 \leq d_v B_v\left(-1+\frac{C_1}{\overline C}+\eta\right)<0.
\]
For a base mode, the possible generating term is bounded instead by
\(C_1d_v\varepsilon_+B_v\), which gives the same conclusion after reducing
\(\varepsilon_+\).  Since initially
\(\varepsilon_0<B_v\) on the base and all other modes vanish, this proves
\eqref{eq:barrier} on \([0,T]\), uniformly in \(N\).

\smallskip
\noindent\emph{Feedback control for the lower barrier.}
For the lower barrier, bound each potentially negative feedback term by the
two upper barriers.  For a nonexceptional feedback triad, the resulting
ratio is
\begin{equation}\label{eq:lower-feedback-ratio}
 \frac{B_u B_z}{A_v}
 =\overline C^{2\ell_u-1}
 \left(\frac{\overline C}{\underline c}\right)^{\ell_v-1}
 \varepsilon_+^{2\ell_u}
 \left(\frac{\varepsilon_+}{\varepsilon_-}\right)^{\ell_v}.
\end{equation}
Since \(\varepsilon_+/\varepsilon_-=r_{\rm amp}\) and
\(\ell_u\geq m_\ell\ell_v\), the right-hand side of
\eqref{eq:lower-feedback-ratio} is bounded by a fixed factor times
\[
 \left[q_*^{2m_\ell}
       \frac{\overline C r_{\rm amp}}{\underline c}\right]^{\ell_v}.
\]
The factor in brackets can be made arbitrarily small by reducing \(q_*\).
Together with \eqref{eq:coefficient-bounds} and the uniform bound on the
number of triads with outputs in later generations, this gives
\begin{equation}\label{eq:feedback-error-bound}
 \sum_{v+u=z}
 \abs{r_{vuz}x_u x_z}
 \leq\eta d_v A_v.
\end{equation}
For an exceptional base triad the corresponding ratio is instead
\begin{equation}\label{eq:exceptional-lower-ratio}
 \frac{B_u B_z}{A_v}
 =\frac{\varepsilon_+^2}{\varepsilon_-}
 =r_{\rm amp}\varepsilon_+.
\end{equation}
After a further reduction of \(\varepsilon_+\), the regular and exceptional
feedback contributions together satisfy \eqref{eq:feedback-error-bound}.
In particular, for every \(0\leq t\leq T\), the already established upper
barrier gives
\[
 \left|\sum_{v+u=z}\varsigma_{vuz}r_{vuz}x_u(t)x_z(t)\right|
 \leq\sum_{v+u=z}\abs{r_{vuz}}B_uB_z
 \leq\eta d_vA_v.
\]
Here \(A_v\) is a fixed prescribed level; no lower bound on \(x_v\),
\(x_u\), or \(x_z\), and no assumption on their signs, is used.
Thus the feedback estimate remains valid before activation, including
before the lower barrier at \(v\) has been established.

\smallskip
\noindent\emph{Base generations.}
Generation \(n_0-1\) has no generating term.  Duhamel's
formula and \eqref{eq:feedback-error-bound} give
\[
 x_v(t)
 \geq e^{-d_vt}\varepsilon_0-\eta A_v
 \geq e^{-C_b}\frac{\varepsilon_+}{2}-\eta\varepsilon_-
 =(4-\eta)A_v>A_v.
\]
Both inputs of \(U_{n_0}\) lie in generation \(n_0-1\) and already satisfy
their lower barriers, so its generating term is nonnegative.  The mode
\(V_{n_0}\) has no generating term because its recurrence would require
\(V_{n_0-2}\), which lies outside \(\Gamma\).  Thus the same estimate proves
the lower barrier on generation \(n_0\).

\smallskip
\noindent\emph{Finite initialization range.}
We first treat finitely many generations without assuming that their
generating products have a favorable sign before activation.  By
\eqref{eq:AB} and \eqref{eq:epsilon-choice},
\[
 \frac{B_v}{A_v}
 =\left(\frac{\overline C}{\underline c}\right)^{\ell_v-1}
  \left(\frac{\varepsilon_+}{\varepsilon_-}\right)^{\ell_v}
 =\left(\frac{\overline C}{\underline c}\right)^{\ell_v-1}
  r_{\rm amp}^{\ell_v}.
\]
In particular, this ratio is independent of \(\varepsilon_0\).
Put \(j=n-n_0\).  From \eqref{eq:cascade-degree-growth},
\begin{equation}\label{eq:BA-growth}
 \log\frac{B_k}{A_k}\leq C_{\rm deg}\lambda^j
\end{equation}
for an output \(k\) in generation \(n+1\), where \(C_{\rm deg}\) depends
only on the fixed comparison parameters and is independent of
\(\varepsilon_0\).  On the other hand,
\begin{equation}\label{eq:activation-damping}
 d_k\delta_n\geq c_*\tau\chi^j.
\end{equation}
Since \(\chi>\lambda\), there is an integer \(J\geq1\), independent of
\(n_0\), \(N\), \(\rho\), and \(\varepsilon_0\), such that
\begin{equation}\label{eq:old-value-damped}
 e^{-d_k\delta_n}B_k\leq A_k
 \qquad(j\geq J).
\end{equation}
Thus \(J\) can be fixed before choosing \(\varepsilon_0\); subsequent
reductions of \(\varepsilon_0\) do not alter this choice.

To apply Lemma~\ref{lem:finite-depth-initialization} on this range, set
\[
 T_J=\min\{T,t_{n_0+J}\},
 \qquad s=R_{n_0}^\alpha t,
 \qquad
 y_v(s)=\varepsilon_0^{-\ell_v}
 x_v(R_{n_0}^{-\alpha}s),
\]
and take as the finite coordinate set
\begin{equation}\label{eq:initialization-coordinate-set}
 \mathcal I=\bigcup_{r=-1}^{J}\mathcal I_r,
 \qquad
 \mathcal I_r
 =\left\{v\in\Gamma_+\cap\Gamma^{(N)}:
   {\mathcal T}_{\rm gen}(v)=n_0+r\right\}.
\end{equation}
(Thus \(\mathcal I_r=\varnothing\) when \(n_0+r>N\).)
The generating inputs of a regular coordinate in \(\mathcal I_r\) lie
in earlier sets \(\mathcal I_{r'}\), as required by the abstract lemma.
Coordinates outside \(\mathcal I\) may occur only through the perturbation
term below.
Introduce the rescaled coefficients
\[
 D_v=\frac{d_v}{R_{n_0}^\alpha},
 \qquad \widetilde a_v=\frac{a_v}{R_{n_0}^\alpha},
 \qquad \widetilde r_{vuz}=\frac{r_{vuz}}{R_{n_0}^\alpha}.
\]
For every target coordinate \(v\in\mathcal I\), the exact rescaled equation
is
\begin{equation}\label{eq:initialization-rescaled}
 \frac{dy_v}{ds}+D_v y_v
 =\mathbf 1_{\mathrm{reg}}(v)\widetilde a_v y_p y_q
  +\mathcal E_v^{\varepsilon_0}(y),
\end{equation}
where \(\mathbf 1_{\mathrm{reg}}(v)\) indicates that \(v\) has a present
nonexceptional generating triad.  The remaining terms are collected in
\begin{equation}\label{eq:initialization-remainder}
 \begin{split}
 \mathcal E_v^{\varepsilon_0}(y)
 ={}&\mathbf 1_{\mathrm{exc}}(v)
 \varepsilon_0^{\ell_p+\ell_q-\ell_v}
 \widetilde a_v y_p y_q+\sum_{v+u=z}\varsigma_{vuz}
 \varepsilon_0^{\ell_u+\ell_z-\ell_v}
 \widetilde r_{vuz}y_u y_z.
 \end{split}
\end{equation}
Here \(\mathbf 1_{\mathrm{exc}}(v)\) indicates that the exceptional base
triad at \(v\) is present in the truncation.  Terms involving a mode outside the truncation are
absent.  For a nonexceptional feedback interaction,
\(\ell_z=\ell_v+\ell_u\), and hence
\[
 \ell_u+\ell_z-\ell_v=2\ell_u\geq2.
\]
For the exceptional base triad, all three degrees are one.  Thus both the
generating exponent \(\ell_p+\ell_q-\ell_v\) and each associated feedback
exponent \(\ell_u+\ell_z-\ell_v\) equal one.  Consequently, every monomial in
\(\mathcal E^{\varepsilon_0}\) carries at least one explicit factor of
\(\varepsilon_0\).

By the bounded generation gaps in the triad relations, all modes occurring
on the right-hand side of an equation of generation at most \(n_0+J\) have
generation at most \(n_0+J+2\).  On these finitely many relative
generations, \eqref{eq:coefficient-bounds} and frequency comparability give
\begin{equation}\label{eq:initialization-uniform-coefficients}
 0\leq D_v\leq K_J,
 \qquad 0\leq\widetilde a_v\leq K_J,
 \qquad \abs{\widetilde r_{vuz}}\leq K_J,
\end{equation}
with \(K_J\) independent of \(n_0\), \(N\), and \(\rho\).  Moreover, the
upper barrier and \(\varepsilon_+=2\varepsilon_0\) give
\begin{equation}\label{eq:initialization-rescaled-upper}
 \abs{y_u(s)}
 \leq\frac{B_u}{\varepsilon_0^{\ell_u}}
 =\overline C^{\ell_u-1}2^{\ell_u}
 \leq M_J
 \qquad(0\leq s\leq R_{n_0}^\alpha T_J,
        \ {\mathcal{T}_{\mathrm{gen}}}(u)\leq n_0+J+2).
\end{equation}
Here \(M_J\) is uniform because \eqref{eq:cascade-degree-growth} bounds
\(\ell_u\) at every fixed relative depth.  The uniform bound on the number
of triads containing a mode, together with
\eqref{eq:initialization-remainder} and
\eqref{eq:initialization-rescaled-upper}, gives
\begin{equation}\label{eq:initialization-remainder-bound}
 \max_{v\in\mathcal I}
 \sup_{0\leq s\leq R_{n_0}^\alpha T_J}
 \abs{\mathcal E_v^{\varepsilon_0}(y(s))}
 \leq K'_J\varepsilon_0.
\end{equation}

Let \(y^{(0)}\) solve the triangular system obtained from
\eqref{eq:initialization-rescaled} by setting
\(\mathcal E^{\varepsilon_0}=0\).  It has the same initial data as \(y\):
value one on the two base generations and zero on higher generations.  The
rescaled time interval has length at most \(C_{\rm time}\), since
\[
 R_{n_0}^\alpha T_J\leq R_{n_0}^\alpha T\leq C_{\rm time}.
\]
We may therefore apply Lemma~\ref{lem:finite-depth-initialization} with
\[
 Y^\varepsilon=y,
 \qquad Y^0=y^{(0)},
 \qquad \varepsilon=\varepsilon_0,
 \qquad S=R_{n_0}^\alpha T_J,
\]
and depth \(J\); the sets beyond the truncation are simply empty.
Since \(S\leq C_{\rm time}\), the recursive bound for the triangular
solution and the explicit Gronwall constant in the proof of the lemma
can both be bounded using \(C_{\rm time}\).  The resulting constants
are therefore uniform in \(n_0\), \(N\), and \(\rho\).  The bounds in
\eqref{eq:initialization-uniform-coefficients},
\eqref{eq:initialization-rescaled-upper}, and
\eqref{eq:initialization-remainder-bound} supply uniform choices of
\(K_0\) and \(M\).  The perturbation parameter in
Lemma~\ref{lem:finite-depth-initialization} is therefore the base amplitude
\(\varepsilon_0\), not an additional parameter of the SQG equation.

For comparison in physical time, define
\[
 \widetilde y_v(t)=y_v(R_{n_0}^\alpha t),
 \qquad
 \widetilde y_v^{(0)}(t)=y_v^{(0)}(R_{n_0}^\alpha t).
\]
After returning to physical time,
Lemma~\ref{lem:finite-depth-initialization} yields the uniform triangular bound
\begin{equation}\label{eq:initialization-triangular-bound}
 \max_{v\in\mathcal I}
 \sup_{0\leq t\leq T_J}\abs{\widetilde y_v^{(0)}(t)}\leq M_J^{(0)}
\end{equation}
and the approximation
\begin{equation}\label{eq:initialization-convergence}
 \max_{v\in\mathcal I}
 \sup_{0\leq t\leq T_J}
 \abs{\widetilde y_v(t)-\widetilde y_v^{(0)}(t)}
 \leq C_J\varepsilon_0.
\end{equation}
These constants are independent of \(n_0\), \(N\), and \(\rho\).  If a mode
required only by a feedback interaction lies outside \(\Gamma^{(N)}\), the
corresponding term is absent from \(\mathcal E^{\varepsilon_0}\).

It remains to extract a lower bound from the triangular reference system.
Normalize the lower barrier by setting
\(\overline A_v=A_v/\varepsilon_0^{\ell_v}\).  Then
\[
 \begin{aligned}
 \overline A_k&=\underline c\,\overline A_p\overline A_q
 &&\text{on every nonexceptional generating triad},\\
 \overline A_v&=\frac2{r_{\rm amp}}
 &&\text{on the base generations}.
 \end{aligned}
\]
Therefore
\begin{equation}\label{eq:aJ-positive}
 a_J:=\min_{\substack{v\in\Gamma_+\\
                       n_0-1\leq {\mathcal{T}_{\mathrm{gen}}}(v)\leq n_0+J}}
       \overline A_v>0
\end{equation}
depends only on \(J\), \(\underline c\), and \(r_{\rm amp}\), and is
independent of \(n_0\), \(N\), \(\rho\), and \(\varepsilon_0\).  The
variation-of-constants formula shows that the triangular solution is
nonnegative.  On a base mode it is exactly
\(e^{-d_vt}\), and hence, for \(0\leq t\leq T_J\),
\[
 \widetilde y_v^{(0)}(t)\geq e^{-C_b}=4\overline A_v.
\]
Suppose inductively that the two input modes of a non-base mode \(k\) obey
\(\widetilde y_p^{(0)}\geq\overline A_p\) and
\(\widetilde y_q^{(0)}\geq\overline A_q\) on its activation interval.  If
\(t_{n+1}\leq T_J\), the same Duhamel integral used in the definition of
\(c_3\) gives
\[
 \widetilde y_k^{(0)}(t_{n+1})
 \geq c_3\overline A_p\overline A_q
 =\frac{c_3}{\underline c}\overline A_k
 \geq8\overline A_k.
\]
The bound then persists after activation.  Indeed, at a hypothetical first
downward contact with \(\overline A_k\),
\[
 \frac{d\widetilde y_k^{(0)}}{dt}
 \geq d_k\overline A_k
 \left(-1+\frac{c_1}{\underline c}\right)>0.
\]
Induction over the finitely many initialization generations therefore gives
\(\widetilde y_k^{(0)}(t_{n+1})\geq8\overline A_k\) at every non-base
activation time not exceeding \(T_J\), and preserves the lower bound
\(\overline A_k\) thereafter.

Reduce \(\varepsilon_0\), without changing the constants already fixed, until
\begin{equation}\label{eq:initialization-epsilon-choice}
 C_J\varepsilon_0<5a_J.
\end{equation}
For every non-base mode in the initialization range whose activation time is
at most \(T\), \eqref{eq:initialization-convergence} yields
\begin{equation}\label{eq:initialization-lower}
 x_v(t_n)=\varepsilon_0^{\ell_v}\widetilde y_v(t_n)\geq3A_v
 \qquad(n_0<n\leq n_0+J,\quad t_n\leq T).
\end{equation}
For these finitely many initialization generations, the lower bound
persists after activation.  Indeed, argue inductively in the generation
and let \(t_*>t_n\) be a hypothetical first downward contact at which
\(x_v(t_*)=A_v\).  The lower bounds for the input modes and
\eqref{eq:feedback-error-bound} give
\[
 \dot x_v(t_*)
 \geq d_vA_v
 \left(-1-\eta+\frac{c_1}{\underline c}\right)>0,
\]
contradicting the definition of \(t_*\).  Thus \(x_v(t)\geq A_v\) for
\(t_n\leq t\leq T\).  This is the only place where the finite-depth
perturbation argument is used.

\smallskip
\noindent\emph{Later generations.}
We now continue the induction, considering only activation times not
exceeding \(T\).  The finite initialization range is covered by
\eqref{eq:initialization-lower}.  Let \(k\) belong to generation \(n+1\),
where \(n-n_0\geq J\), and suppose its input modes satisfy their lower
barriers on
\([t_n,t_{n+1}]\).  Since \(d_k{\approx} R_n^\alpha\) and
\(d_k\delta_n\geq c_*\tau\), the generating part of Duhamel's formula gives
\[
 \int_{t_n}^{t_{n+1}}
 e^{-d_k(t_{n+1}-s)}a_k A_p A_q\,ds
 \geq c_3 A_p A_q.
\]
The absolute upper barrier, \eqref{eq:old-value-damped}, and
\eqref{eq:feedback-error-bound} therefore give
\[
 x_k(t_{n+1})
 \geq-e^{-d_k\delta_n}B_k+c_3 A_p A_q-\eta A_k
 \geq\left(\frac{c_3}{\underline c}-1-\eta\right)A_k
 \geq3A_k
\]
by \eqref{eq:lower-constant-choice}.  This initializes the lower bound at
generation \(n+1\).  It also persists: at a hypothetical first downward
contact of \(x_k\) with \(A_k\), the input-mode lower bounds and
\eqref{eq:feedback-error-bound} give
\[
 \dot x_k
 \geq d_k A_k
 \left(-1-\eta+\frac{c_1}{\underline c}\right)>0,
\]
again by \eqref{eq:lower-constant-choice}, a contradiction.  Iterating the
initialization and persistence steps proves \eqref{eq:lower-barrier}.  All
choices are independent of \(N\) and \(n_0\), and uniform for
\(1/2\leq\rho\leq1\).
\end{proof}

\subsection{Global solution and proof of Theorem~\ref{thm:SQG-main}}

With the sparse geometry and uniform cascade estimates in place, we now
pass from the Galerkin systems to the full projected equation.  We first
establish global existence and uniqueness for arbitrary smooth initial data
whose Fourier support is contained in \(\Gamma\).
Proposition~\ref{prop:global} will also justify passing the comparison bounds
from finite truncations to the infinite cascade.

\begin{proposition}[Global existence and uniqueness on the sparse Fourier set]
\label{prop:global}
Let \(1<\alpha<2\).  For every real-valued \(C^\infty\) datum on
\(\T^2\) whose Fourier support is contained in \(\Gamma\), equation
\eqref{eq:projected-SQG} has a unique global real-valued smooth solution.
This solution satisfies the exact energy identity
\[
 \frac12\norm{\theta(t)}_2^2
 +\int_0^t\norm{\Lambda^{\alpha/2}\theta(s)}_2^2\,ds
 =\frac12\norm{\theta(0)}_2^2
 \qquad(t\geq0).
\]
\end{proposition}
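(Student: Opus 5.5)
The plan is a Galerkin scheme on the truncations \(\Gamma^{(N)}\), combined with the exact energy identity and a subcritical higher-regularity estimate, followed by compactness and a standard uniqueness argument.

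\emph{Galerkin approximation.} For each \(N\), consider the finite-dimensional system obtained by restricting \eqref{eq:projected-SQG} to Fourier modes in \(\Gamma^{(N)}\). Equivalently, replace \(\Pi_\Gamma\) by \(\Pi_{\Gamma^{(N)}}\) in \(\mathcal Q_\Gamma\), with initial datum \(\Pi_{\Gamma^{(N)}}\theta(0)\). The set \(\Gamma^{(N)}\) is symmetric and avoids \(0\), so Proposition~\ref{prop:SQG-energy} applies verbatim with \(\Gamma\) replaced by \(\Gamma^{(N)}\). This gives the exact energy identity for the ODE solution \(\theta^N\). Hence \(\theta^N\) exists globally, preserves real-valuedness, and is bounded in \(L^\infty_tL^2\cap L^2_t\dot H^{\alpha/2}\) uniformly in \(N\).

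\emph{Uniform higher regularity.} The key step is a bound for \(\norm{\Lambda^s\theta^N}_2\) that holds for every \(s\) and is uniform in \(N\) on every bounded time interval. Because \(\alpha>1\), the problem is subcritical with respect to the conserved \(L^2\) norm. I pair the equation with \(\Lambda^{2s}\theta^N\). The transport part \(v\cdot\nabla\Lambda^s\theta\) cancels by incompressibility; one checks that projecting both factors onto \(\Gamma^{(N)}\) does not spoil this cancellation, since \(\Pi\) is self-adjoint and commutes with \(\Lambda^s\) and \(\theta^N=\Pi\theta^N\). What remains is the Kato--Ponce commutator
\[
\bigl|\langle [\Lambda^s,v\cdot\nabla]\theta,\Lambda^s\theta\rangle\bigr|\lesssim \norm{\nabla v}_{L^p}\norm{\Lambda^s\theta}_{L^{q}}\norm{\Lambda^s\theta}_{L^{q'}}+\cdots.
\]
I would estimate it by Gagliardo--Nirenberg interpolation between \(L^2\) and \(\dot H^{s+\alpha/2}\), using that \(\nabla v\) has the same regularity as \(\nabla\theta\). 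Since \(\alpha>1\), the total power of \(\norm{\Lambda^{s+\alpha/2}\theta}_2\) is strictly less than \(2\). Young's inequality then absorbs it into the dissipation, leaving a Gronwall-type inequality whose coefficients depend only on the \(L^2\) norm. Iterating in \(s\) gives bounds for all Sobolev norms, uniform in \(N\); the same estimates then give bounds on time derivatives.

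\emph{Main obstacle.} This higher-regularity step is where the difficulty lies. The delicate point is verifying that the energy structure survives the projections. If the commutator bookkeeping becomes awkward, a more robust fallback is to work mode by mode. The sparse geometry of Lemma~\ref{lem:geometry} implies that each mode interacts in at most finitely many triads, with comparable frequencies. A weighted \(\ell^2\) estimate then suffices: it compares \(|\mathbf c|\lesssim|k|^{1+\gamma}\) against the dissipation \(|k|^\alpha\) and uses \(L^2\)-boundedness of the coefficients.

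\emph{Passage to the limit and uniqueness.} Aubin--Lions compactness extracts a subsequence converging in \(C([0,T];H^s)\) for every \(s\) and \(T\). The limit is a smooth solution whose Fourier support lies in \(\Gamma\). The energy identity passes to the limit because of the strong convergence. Uniqueness follows from an \(L^2\) estimate on the difference \(w=\theta_1-\theta_2\) of two smooth solutions. The term with \(\theta_1\) as the transported quantity cancels by Proposition~\ref{prop:SQG-energy}, and the remaining term is bounded by \(\norm{\nabla\theta_2}_\infty\norm{w}_2\norm{\Lambda^{-1}\nabla^\perp w}_2\le C\norm{w}_2^2\). Gronwall then gives \(w\equiv0\), which also shows that the whole Galerkin sequence converges.
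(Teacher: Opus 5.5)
\textbf{The gap is in the uniform higher-regularity step, which is the heart of the proposition.} Your reduction to the commutator is fine: since $\theta=\Pi_\Gamma\theta$, the projection drops out of $\langle\Lambda^s\mathcal Q_\Gamma(\theta,\theta),\Lambda^s\theta\rangle$. The estimate you then propose, however, does not close.

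Kato--Ponce plus two-dimensional Gagliardo--Nirenberg treats the nonlinearity as full SQG. For full SQG the conserved $L^2$ norm is \emph{supercritical} when $\alpha<2$: under $\theta\mapsto\lambda^{\alpha-1}\theta(\lambda^\alpha t,\lambda x)$ it scales like $\lambda^{\alpha-2}$. Concretely, the commutator produces a term like $\norm{\nabla\theta}_{L^\infty}\norm{\Lambda^s\theta}_2^2$. This costs $2s+2$ derivatives on the $L^2$ scale: one from $\nabla$, and essentially one more from $H^{1+\epsilon}(\T^2)\subset L^\infty$. Interpolating between $L^2$ and $\dot H^{s+\alpha/2}$ therefore gives a power at least $(2s+2)/(s+\alpha/2)$ of the dissipative norm, which is below $2$ only if $\alpha>2$.

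The classical theory for $1<\alpha<2$ relies on the $L^\infty$ maximum principle instead. That is not available for the projected equation, because $\Pi_\Gamma$ destroys the transport structure. So the real obstacle is not whether the cancellation survives the projection (that is immediate), but that the generic estimates are supercritical. The condition $\alpha>1$ suffices here only because of the sparse geometry, which your primary argument never uses.

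\textbf{Your fallback is the right idea and is essentially the paper's route, but as written it also fails on part of the range.} The quantity $\abs{k}^{1+\gamma}\approx(\lambda\mu)^n$ is the size of the determinant $p^\perp\cdot q$, not of the symmetrized coefficient. Comparing $\abs{k}^{1+\gamma}$ with $\abs{k}^\alpha$ closes only when $\alpha>1+\gamma$, where $\gamma\approx 0.37$.

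The correct input is \eqref{eq:multiplier-size}, $\abs{\mathbf c_{k;p,q}}\approx\abs{k}^\gamma$. Even the trivial bound $\abs{\mathbf c_{k;p,q}}\le\max(\abs p,\abs q)\lesssim\abs k$ would suffice for $\alpha>1$. Combine this with three facts:
\begin{itemize}
\item bounded interaction degree;
\item comparable frequencies within each triad;
\item the bound $\norm{\widehat\theta}_{\ell^\infty}\lesssim\norm{\theta}_2$ applied to one factor, which is exactly where the dimensional loss disappears.
\end{itemize}
Together these give $\abs{\langle\Lambda^s\mathcal Q_\Gamma(\theta,\theta),\Lambda^s\theta\rangle}\le C\norm{\theta}_2\norm{\Lambda^{s+\gamma/2}\theta}_2^2$. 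Interpolation with $\vartheta=\gamma/\alpha<1$ and Young's inequality then yield a \emph{linear} Gronwall inequality for $\norm{\Lambda^s\theta_N}_2^2$. Its coefficient depends only on $\norm{\theta^{\rm in}}_2$ and is uniform in $N$.

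With this repair, the rest of your plan goes through. Your uniqueness argument is correct for smooth solutions and simpler than the paper's $H^s$ estimate for the difference. It uses the cancellation of $\langle v_{\theta_1}\cdot\nabla w,w\rangle$ and the bound $\norm{\nabla\theta_2}_{L^\infty}\norm{w}_2^2$.
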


\begin{proof}
Write \(\theta^{\rm in}\) for the initial datum.

\smallskip
\noindent\emph{Galerkin energy identity.}
Lemma~\ref{lem:geometry} and additive separation imply the finite-interaction
estimate
\[
 \abs{\widehat{\mathcal Q_\Gamma(\theta,\theta)}(k)}
 \leq C\abs{k}^\gamma
 \sum_{(p,q)\in\mathcal N(k)}
 \abs{\widehat\theta(p)\widehat\theta(q)},
 \qquad \#\mathcal N(k)\leq C,
\]
where \(\mathcal N(k)\) is the set of ordered input pairs that contribute
to the output \(k\).  Moreover, the three frequencies in each contributing
triad are comparable, with constants independent of the generation.

Let \(\theta_N\) solve the Galerkin system on \(\Gamma^{(N)}\), with
\[
 \theta_N(0)=\Pi_{\Gamma^{(N)}}\theta^{\rm in}.
\]
For every Galerkin state \(\theta\), the vector field
\(\nabla^\perp\Lambda^{-1}\theta\) is divergence free.  Since the Fourier
projection is orthogonal and \(\theta\) belongs to its range,
\[
 \left\langle
 \mathcal Q_{\Gamma^{(N)}}(\theta,\theta),\theta
 \right\rangle
 =-\int_{\T^2}
 (\nabla^\perp\Lambda^{-1}\theta\cdot\nabla\theta)\theta\,dx
 =0.
\]
The boundary term vanishes by periodicity.  Thus every finite truncation
satisfies the exact \(L^2\) identity.  In particular, this identity prevents
finite-time blow-up of the Galerkin coordinates, so the local solution of
the polynomial ODE system extends globally.

\smallskip
\noindent\emph{Uniform Sobolev estimates.}
For \(s\geq0\), let \(\mathfrak T\) denote the set of unordered
contributing triads and let \(R_{\mathfrak t}\) denote the common magnitude
scale of the three frequencies in \(\mathfrak t\).  Frequency comparability
shows that the absolute value of
the contribution of \(\mathfrak t=\{p,q,k\}\) is bounded by
\(CR_{\mathfrak t}^{2s+\gamma}
  \abs{\widehat\theta(p)\widehat\theta(q)\widehat\theta(k)}\).
After bounding one factor in \(\ell^\infty\), Cauchy--Schwarz and the
uniform bound on the number of triads containing a given mode give
\[
 \begin{aligned}
 &\sum_{\mathfrak t\in\mathfrak T}
 R_{\mathfrak t}^{2s+\gamma}
 \abs{\widehat\theta(p)\widehat\theta(q)\widehat\theta(k)}\\
 &\quad\leq
 \norm{\widehat\theta}_{\ell^\infty}
 \sum_{\mathfrak t\in\mathfrak T}
 \bigl(R_{\mathfrak t}^{s+\gamma/2}\abs{\widehat\theta(p)}\bigr)
 \bigl(R_{\mathfrak t}^{s+\gamma/2}\abs{\widehat\theta(k)}\bigr)\\
 &\quad\leq C\norm{\theta}_2
 \norm{\Lambda^{s+\gamma/2}\theta}_2^2.
 \end{aligned}
\]
Here the choice of which factor is placed in \(\ell^\infty\) and the
permutations within a triad are absorbed into the constant.  Consequently,
\[
 \left|\left\langle
 \Lambda^s\mathcal Q_\Gamma(\theta,\theta),
 \Lambda^s\theta
 \right\rangle\right|
 \leq C\norm{\theta}_2
 \norm{\Lambda^{s+\gamma/2}\theta}_2^2.
\]
The same estimate holds uniformly for \(\Gamma^{(N)}\).  Apply this to
\(\theta_N\).  With \(\vartheta=\gamma/\alpha\in(0,1)\), interpolation
gives
\[
 \norm{\Lambda^{s+\gamma/2}\theta_N}_2^2
 \leq
 \norm{\Lambda^s\theta_N}_2^{2(1-\vartheta)}
 \norm{\Lambda^{s+\alpha/2}\theta_N}_2^{2\vartheta}.
\]
Young's inequality and the \(L^2\) identity therefore yield
\[
 \frac d{dt}\norm{\Lambda^s\theta_N}_2^2
 +\norm{\Lambda^{s+\alpha/2}\theta_N}_2^2
 \leq
 C_{\alpha,s}\norm{\theta^{\rm in}}_2^{\alpha/(\alpha-\gamma)}
 \norm{\Lambda^s\theta_N}_2^2,
\]
uniformly in \(N\).  Gronwall's inequality then gives, for every finite
\(S\) and every \(s\geq0\),
\begin{equation}\label{eq:uniform-Hs}
 \sup_{0\leq t\leq S}\norm{\theta_N(t)}_{H^s}^2
 +\int_0^S\norm{\theta_N(t)}_{H^{s+\alpha/2}}^2\,dt
 \leq C_{S,s,\theta^{\rm in}}.
\end{equation}
Here the homogeneous and inhomogeneous Sobolev norms are equivalent on
\(\operatorname{Ran}\Pi_\Gamma\), because \(0\notin\Gamma\).

\smallskip
\noindent\emph{Bilinear estimate and compactness.}
Define \(\mathcal B_\Gamma(f,g)\) by replacing
\(\widehat f(p)\widehat f(q)\) on the right-hand side of
\eqref{eq:SQG-symbol} with \(\widehat f(p)\widehat g(q)\).  The multiplier
in \eqref{eq:SQG-symbol} is symmetric under \(p\leftrightarrow q\), so
\(\mathcal B_\Gamma(f,g)=\mathcal B_\Gamma(g,f)\) and
\(\mathcal Q_\Gamma(f,f)=\mathcal B_\Gamma(f,f)\).  For \(s\geq0\) and
functions \(f,g\) whose Fourier supports are contained in \(\Gamma\),
bounded interaction degree gives the polarized estimate
\begin{equation}\label{eq:polarized-Q-map}
 \norm{\mathcal B_\Gamma(f,g)}_{H^{s-\gamma}}
 \leq C\bigl(\norm{f}_2\norm{g}_{H^s}
              +\norm{g}_2\norm{f}_{H^s}\bigr).
\end{equation}
Taking \(g=f\) gives
\begin{equation}\label{eq:Q-map}
 \norm{\mathcal Q_\Gamma(f,f)}_{H^{s-\gamma}}
 \leq C\norm{f}_2\norm{f}_{H^s}.
\end{equation}
Indeed, frequency comparability and the multiplier bound imply
\[
 \abs{k}^{s-\gamma}
 \abs{\widehat{\mathcal B_\Gamma(f,g)}(k)}
 \leq C\sum_{(p,q)\in\mathcal N(k)}
 \abs{\widehat f(p)}\,\abs{q}^s\abs{\widehat g(q)}.
\]
Squaring, summing in \(k\), and using the uniformly bounded interaction
degree give
\[
 \norm{\mathcal B_\Gamma(f,g)}_{H^{s-\gamma}}^2
 \leq C\norm{\widehat f}_{\ell^\infty}^2
          \norm{g}_{H^s}^2
 \leq C\norm f_2^2\norm g_{H^s}^2.
\]
Interchanging \(f\) and \(g\) gives \eqref{eq:polarized-Q-map}.

Because \(\gamma<\alpha/2\), estimate \eqref{eq:Q-map},
\eqref{eq:uniform-Hs}, and the equation imply
\[
 \sup_N\norm{\partial_t\theta_N}_{L^2(0,S;H^{s-\alpha/2})}<\infty.
\]

Extend each Galerkin solution by zero outside \(\Gamma^{(N)}\), thereby
regarding all \(\theta_N\) as functions on the same torus.  The compact
embedding
\(H^{s+\alpha/2}(\T^2)\Subset H^s(\T^2)\), the displayed time-derivative
bound, and the Aubin--Lions lemma \cite{Simon1987} give, after a diagonal
extraction over positive integer values of \(s,S\),
\begin{equation}\label{eq:strong-Galerkin-convergence}
 \theta_N\longrightarrow\theta
 \quad\text{strongly in }L^2(0,S;H^s)
 \quad\text{for every }s\geq0.
\end{equation}
Convergence at an arbitrary real index \(s\geq0\) follows by carrying out
the extraction at a larger integer index.  To pass to the nonlinear term,
write
\[
 \mathcal Q_\Gamma(\theta_N,\theta_N)
 -\mathcal Q_\Gamma(\theta,\theta)
 =\mathcal B_\Gamma(\theta_N-\theta,\theta_N)
  +\mathcal B_\Gamma(\theta,\theta_N-\theta).
\]
Applying \eqref{eq:polarized-Q-map} with input regularity
\(s+\gamma\), together with \eqref{eq:uniform-Hs} and
\eqref{eq:strong-Galerkin-convergence}, shows that the left-hand side
converges strongly to zero in \(L^1(0,S;H^s)\) for every \(s\geq0\).

To account for the finite output projection, put
\(P_N=\Pi_{\Gamma^{(N)}}\).  Since the Fourier support of \(\theta_N\) is
contained in \(\Gamma^{(N)}\),
\[
 \mathcal Q_{\Gamma^{(N)}}(\theta_N,\theta_N)
 =P_N\mathcal Q_\Gamma(\theta_N,\theta_N).
\]
Moreover,
\[
 \begin{aligned}
 &P_N\mathcal Q_\Gamma(\theta_N,\theta_N)
       -\mathcal Q_\Gamma(\theta,\theta)\\
 &\quad=P_N\bigl[\mathcal Q_\Gamma(\theta_N,\theta_N)
                    -\mathcal Q_\Gamma(\theta,\theta)\bigr]
       +(P_N-\mathrm{Id})\mathcal Q_\Gamma(\theta,\theta).
 \end{aligned}
\]
The first term tends to zero by the strong \(L^1(0,S;H^s)\) convergence
established after \eqref{eq:strong-Galerkin-convergence}, and the second tends
to zero because \(P_N\to\mathrm{Id}\) strongly on every
Sobolev space within \(\operatorname{Ran}\Pi_\Gamma\).  Hence the finite
nonlinearities converge strongly in \(L^1(0,S;H^s)\), and the limit
satisfies \eqref{eq:projected-SQG} in the distributional sense.

\smallskip
\noindent\emph{Initial value and smoothness.}
For each fixed Fourier mode, the Galerkin equations and the uniform bounds
give a uniform \(W^{1,2}(0,S)\) estimate for the corresponding coefficient.
A further countable diagonal extraction therefore gives uniform convergence
of every Fourier coefficient on \([0,S]\).  In particular, the limit attains
the prescribed initial value.  For the Galerkin solutions used in
Lemma~\ref{lem:barrier}, this coordinatewise convergence also transfers the
upper and lower barriers to the limit.

Weak compactness in \eqref{eq:uniform-Hs} and the equation give, for every
\(s\geq0\),
\[
 \theta\in L^2(0,S;H^{s+\alpha/2}),
 \qquad
 \partial_t\theta\in L^2(0,S;H^{s-\alpha/2}).
\]
The Lions--Magenes theorem \cite{LionsMagenes1972} for the Hilbert triple
\[
 H^{s+\alpha/2}\subset H^s\subset H^{s-\alpha/2}
\]
yields \(\theta\in C([0,S];H^s)\).  Since this holds for every \(s\), the
solution is smooth in space.  More explicitly, for any \(r\geq0\),
\(\theta\in C([0,S];H^{r+\alpha})\) implies that
\(\Lambda^\alpha\theta\in C([0,S];H^r)\), while
\eqref{eq:polarized-Q-map}, applied with \(s=r+\gamma\), gives
\(\mathcal B_\Gamma(\theta,\theta)\in C([0,S];H^r)\).
The equation then gives
\(\partial_t\theta\in C([0,S];H^r)\).  Differentiating the equation and
using bilinearity, the same estimate yields inductively
\(\partial_t^j\theta\in C([0,S];H^r)\) for every
\(j\in\mathbb N_0\) and \(r\geq0\).
Thus \(\theta\) is smooth in space and time.  As \(S<\infty\) was
arbitrary, the solution is global.

\smallskip
\noindent\emph{Uniqueness and the energy identity.}
Let \(\theta\) and \(\widetilde\theta\) be two smooth solutions with the
same initial datum, and set \(h=\theta-\widetilde\theta\).  Then
\begin{equation}\label{eq:uniqueness-difference-equation}
 \partial_t h+\Lambda^\alpha h
 =\mathcal B_\Gamma(h,\theta)
  +\mathcal B_\Gamma(\widetilde\theta,h),
 \qquad h(0)=0.
\end{equation}
For every \(s\geq0\), the same unordered-triad argument, applied to the
difference equation, gives
\begin{equation}\label{eq:bilinear-difference-estimate}
 \begin{split}
 &\left|\left\langle
 \Lambda^s\bigl[\mathcal B_\Gamma(h,\theta)
                  +\mathcal B_\Gamma(\widetilde\theta,h)\bigr],
 \Lambda^s h\right\rangle\right|\\
 &\hspace{2cm}\leq
 C\bigl(\norm{\theta}_2+\norm{\widetilde\theta}_2\bigr)
 \norm{\Lambda^{s+\gamma/2}h}_2^2.
 \end{split}
\end{equation}
Here one non-difference Fourier factor is bounded in \(\ell^\infty\) by
the corresponding \(L^2\) norm.  Frequency comparability, the multiplier
bound \(O(\abs{k}^\gamma)\), and bounded interaction degree then control the
two remaining \(h\)-factors by the displayed weighted \(\ell^2\) norm.

Put \(\vartheta=\gamma/\alpha\) and
\[
 M(t)=\norm{\theta(t)}_2+\norm{\widetilde\theta(t)}_2.
\]
Taking the \(H^s\) energy of
\eqref{eq:uniqueness-difference-equation}, using
\eqref{eq:bilinear-difference-estimate} and interpolation gives
\[
 \frac12\frac d{dt}\norm{\Lambda^s h}_2^2
 +\norm{\Lambda^{s+\alpha/2}h}_2^2
 \leq CM(t)
 \norm{\Lambda^s h}_2^{2(1-\vartheta)}
 \norm{\Lambda^{s+\alpha/2}h}_2^{2\vartheta}.
\]
Young's inequality with exponents \(1/\vartheta\) and
\(1/(1-\vartheta)\) therefore yields
\begin{equation}\label{eq:uniqueness-gronwall}
 \frac d{dt}\norm{\Lambda^s h}_2^2
 +\norm{\Lambda^{s+\alpha/2}h}_2^2
 \leq C_{\alpha,s}
 M(t)^{\alpha/(\alpha-\gamma)}
 \norm{\Lambda^s h}_2^2.
\end{equation}
The coefficient on the right hand side belongs to \(L^1(0,S)\) because
\(\theta,\widetilde\theta\in C([0,S];L^2)\).  Since \(h(0)=0\), Gronwall's
inequality gives \(h\equiv0\).

Finally, real-valuedness and the inclusion
\(\operatorname{supp}\widehat\theta(t)\subset\Gamma\) pass to the limit
coefficientwise.  This inclusion also implies zero mean because
\(0\notin\Gamma\).  The smooth limit therefore admits the same periodic
integration by parts as the Galerkin solutions.  The cancellation proves
the asserted energy identity.
\end{proof}

\begin{remark}[Range of the dissipation exponent]\label{rem:alpha-range}
The restriction \(\alpha>1\) enters at two points.  First, it makes the
interval \((\lambda,\lambda^\alpha)\) nonempty, as required for the choice
of \(\chi\) in Lemma~\ref{lem:barrier}.  Second, since
\eqref{eq:gamma} gives \(\gamma<1/2\), it ensures
\(\gamma<\alpha/2\), as required for the time-derivative bound used in the
Aubin--Lions argument.  Thus
these parts of the construction apply for every \(\alpha>1\).  We retain
\(1<\alpha<2\) in Theorem~\ref{thm:SQG-main} to remain within the
subcritical dissipative SQG regime considered here.
\end{remark}

\begin{proof}[Proof of Theorem~\ref{thm:SQG-main}]
We choose the cascade scale to match the prescribed time and then use the
two barriers to bound the analyticity radius from above and below.

\smallskip
\noindent\emph{Choice of the base generation.}
Fix \(\chi\in(\lambda,\lambda^\alpha)\) and \(\tau>0\), and set
\[
 C_{\rm time}=\frac{\tau\lambda^\alpha}{1-\chi\lambda^{-\alpha}}.
\]
Choose the constants in Lemma~\ref{lem:barrier} for this
\(C_{\rm time}\).  If the
two base generations are \(m-1\) and \(m\), the activation times accumulate
at
\[
 T_m
 =\sum_{n=m}^\infty
 \tau\chi^{n-m}\lambda^{-\alpha n}
 =\frac{\tau\lambda^{-\alpha m}}
 {1-\chi\lambda^{-\alpha}}.
\]
In particular,
\begin{equation}\label{eq:activation-time-scaling}
 T_{m-1}=\lambda^\alpha T_m.
\end{equation}
The degree vector obeys the same matrix recurrence as the modes.
For any choice of base generation \(m\), a generation-\(n\) mode satisfies
\[
 \ell_v\leq C\lambda^{n-m}
 \leq C\lambda^{-m}\abs{v}.
\]
Since \(0<\underline c,\varepsilon_-<1\), \eqref{eq:AB} gives
\[
 -\log A_v
 =-(\ell_v-1)\log\underline c-\ell_v\log\varepsilon_-
 \leq C\ell_v
 \leq C_A\lambda^{-m}\abs{v},
\]
where \(C_A>0\) is independent of \(m\).
Choose an integer \(N_1\geq N_*\) such that
\(C_A\lambda^{-N_1}<1/2\), and define
\[
 T_\alpha:=T_{N_1}.
\]
Given \(0<T<T_\alpha\), choose the unique integer \(n_0\) such that
\[
 T_{n_0}\leq T<\lambda^\alpha T_{n_0}.
\]
Indeed, by \eqref{eq:activation-time-scaling}, the half-open intervals
\([T_m,T_{m-1})\), \(m>N_1\), partition \((0,T_{N_1})\).  Hence this
choice is unique and satisfies \(n_0>N_1\), so
\(n_0-1\geq N_*\).  Put
\[
 \varepsilon_{\rm rad}=C_A\lambda^{-n_0},
 \qquad
 \rho=1-\varepsilon_{\rm rad}.
\]
Then \(1/2<\rho<1\), so the uniform version of
Lemma~\ref{lem:barrier} applies.
Moreover, the upper bound for \(T\) and the identity
\(R_{n_0}=\lambda^{n_0}\) give
\[
 T<\lambda^\alpha T_{n_0}
   =C_{\rm time}R_{n_0}^{-\alpha},
\]
which is the time restriction in Lemma~\ref{lem:barrier}.

\smallskip
\noindent\emph{Initial data and transfer of the barriers.}
Choose the signs from \eqref{eq:positive-forward-interaction}.  For
\(v\in\Gamma_+\) in the two base generations, set
\[
 \widehat\theta(0,v)=\mathbf{s}_v\varepsilon_0 w_v,
 \qquad
 \widehat\theta(0,-v)=\widehat\theta(0,v),
\]
where \(\varepsilon_0\) is chosen as in Lemma~\ref{lem:barrier}, and set all
other coefficients to zero.  These are real-valued
trigonometric-polynomial initial data.  Proposition~\ref{prop:global} gives
a global smooth solution.  The
coordinatewise convergence in its proof carries both barriers from the
finite truncations to the limit.

Since \(t_n\uparrow T_{n_0}\leq T\), every mode above generation \(n_0\)
has been activated by time \(T\).  Lemma~\ref{lem:barrier} and the choice
\(m=n_0\) in the degree estimate therefore give
\[
 -\log A_v\leq \varepsilon_{\rm rad}\abs{v}=(1-\rho)\abs{v},
 \qquad
 A_v e^{-\rho\abs{v}}\geq e^{-\abs{v}}.
\]
Consequently,
\[
 \abs{\widehat\theta(T,v)}
 \geq A_v\abs{v}^{\alpha-\gamma}e^{-\rho\abs{v}}
 \geq \abs{v}^{\alpha-\gamma}e^{-\abs{v}}.
\]

\smallskip
\noindent\emph{Upper bound for the analyticity radius.}
Choose modes \(\xi_j\) from generations \(n_j\to\infty\).  Then
\(\abs{\xi_j}\to\infty\), and
\begin{equation}\label{eq:tail}
 \abs{\widehat\theta(T,\xi_j)}
 \geq \abs{\xi_j}^{\alpha-\gamma}e^{-\abs{\xi_j}},
 \qquad
 \gamma=\frac{\log\mu}{\log\lambda}<\frac12,
\end{equation}
where \(\lambda\) is the dominant eigenvalue of \(A\), equivalently the
largest root of \(z^3-z^2-2z+1\), and \(\mu\) is the modulus of its
negative root.  At analytic weight
\(\sigma=1\), the corresponding weighted summands satisfy
\[
 e^{2\abs{\xi_j}}\abs{\widehat\theta(T,\xi_j)}^2
 \geq \abs{\xi_j}^{2(\alpha-\gamma)}.
\]
They do not tend to zero.  Hence
\(\rad(\theta(T))\leq1\).  The initial datum has finite Fourier support, so
\(\rad(\theta(0))=\infty\).

\smallskip
\noindent\emph{Positive lower bound for the analyticity radius.}
The parameter choices also ensure that
\(q_*=\overline C\varepsilon_+<1\), and hence
\(B_v=\overline C^{-1}q_*^{\ell_v}\leq1\).  The upper barrier therefore
gives
\[
 \abs{\widehat\theta(T,v)}
 \leq \abs{v}^{\alpha-\gamma}e^{-\rho\abs{v}}
 \qquad(v\in\Gamma).
\]
Each generation contains two positive modes and their negatives, and
\(\abs{v}{\approx}\lambda^n\).  Therefore, for every
\(0\leq\sigma<\rho\),
\[
 \sum_{v\in\Gamma}
 e^{2\sigma\abs{v}}\abs{\widehat\theta(T,v)}^2
 \lesssim
 \sum_{n\geq n_0-1}
 \lambda^{2(\alpha-\gamma)n}
 e^{-c(\rho-\sigma)\lambda^n}
 <\infty.
\]
Thus \(\rad(\theta(T))\geq\rho>0\), completing the proof.
\end{proof}

\section{Discussion and open problems}
\label{sec:scope}

The vector model \eqref{eq:model} combines Laplacian dissipation with
a first-order quadratic nonlinearity satisfying an exact \(L^2\)
cancellation.  Theorem~\ref{thm:main} nevertheless produces a global
smooth solution whose analyticity radius is equal to one at both
\(t=0\) and a prescribed time \(T>0\).  This conclusion concerns the
anisotropic model and its invariant cyclic Burgers family; it does
not extend directly to the genuine Navier--Stokes nonlinearity.

For projected SQG, Theorem~\ref{thm:SQG-main} constructs a symmetric
sparse set with exact additive separation and iterates the transfer in
the projected equation itself.  The solution is global and smooth, starts
from initial data with finite Fourier support, and has finite, positive
analyticity radius at the prescribed time.  The projection is tailored to
the cascade, so this
conclusion does not extend directly to the unprojected SQG equation.

Two structural questions remain.\\
{\it Structure of the Navier--Stokes interaction.}
The first direction is to identify structural properties of the
genuine Navier--Stokes nonlinearity, beyond its energy cancellation
and differential order, that enforce quantitative growth of the
analyticity radius.  Such a characterization would distinguish the
role of the full interaction geometry from what can be deduced from
the energy law alone.
\\{\it Robustness of the projected-SQG cascade.}
The second direction is to determine whether the sparse construction can
be made robust under enlargement of \(\Gamma\), or whether an analogous
coefficientwise lower bound can be obtained for less restrictive
projections.  In the present proof, exact additive separation and the
exponential suppression of feedback interactions at the input modes are
essential.  A denser set would introduce new interactions whose cumulative
size and signs are not controlled by the current barriers.

\section*{Acknowledgments}
The research of Ke Chen was supported by the Research Centre for Nonlinear Analysis at The Hong Kong Polytechnic University.
The research of Quoc-Hung Nguyen was supported by the CAS Project for
Young Scientists in Basic Research (Grant No.~YSBR-031) and by the
National Natural Science Foundation of China (Grant
Nos.~1251101538 and 12595282). Ping Zhang was partially supported by
the National Key R\&D Program of China (Grant No.~2021YFA1000800) and
the National Natural Science Foundation of China (Grant
Nos.~12421001, 12494542, and 12288201).

\section*{AI-use disclosure}

This manuscript was written by the authors.  OpenAI's ChatGPT with
GPT-5.6 was used as an assistive tool for language editing, organization,
bibliographic checks, and exploratory mathematical discussion.  The
models in Theorems~\ref{thm:main} and~\ref{thm:SQG-main} were proposed by
the authors.  During the development of the construction underlying
Theorem~\ref{thm:SQG-main}, ChatGPT also suggested preliminary ideas
through several discussions.  The authors independently assessed these
suggestions, verified all arguments, and rewrote the complete proof in
their own form.  The authors take full responsibility for the content
and conclusions of the paper.

\end{document}